\theoremstyle{thmstyleone}%
\newtheorem{theorem}{Theorem}[section]% meant for sectionwise numbers
\newtheorem{Lem}[theorem]{Lemma}
\newtheorem{Cor}[theorem]{Corollary}
\theoremstyle{thmstyletwo}%
\newtheorem{remark}[theorem]{Remark}
\theoremstyle{thmstylethree}%
\numberwithin{equation}{section}
\newcommand{\p}{\mathbb{P}}
\newcommand{\e}{\mathbb{E}}
\newcommand{\real}{\mathbb{R}}
\newcommand{\n}{\mathbb{N}}
\newcommand{\1}{{\bf 1}}
\newcommand{\rd}{\mathrm{d}}
\newcommand{\eps}{\varepsilon}
\newcommand{\pde}{\psi_{\delta,\varepsilon}}
\newcommand{\ude}{u_{\delta,\varepsilon}}
\newcommand{\sigxs}{\sigma(X_s)-\tilde{\sigma}\left(s,\tilde{X}_s\right)}
\newcommand{\sigab}{\left|\sigma(X_s)-\tilde{\sigma}(s,X_s)\right|^\alpha}
\begin{document}

\title{$L^{\alpha-1}$ distance between two one-dimensional stochastic differential equations with drift terms driven by a symmetric $\alpha$-stable process}

\author{Takuya NAKAGAWA
\footnote{
		Graduate School of Science,
		Kyoto University,
		Yoshida-honmachi, 
		Kyoto, 606--8501, 
		Japan. 
		Email: \texttt{takuya.nakagawa73@gmail.com}
  ORCiD: 0000-0003-0379-5833
}
}

%\authorrunning{Short form of author list} % if too long for running head

%\institute{Takuya NAKAGAWA \at
%              Department of Mathematical Sciences 
%              Ritsumeikan University
%1-1-1 Nojihigashi, Kusatsu, Shiga, 525-8577, Japan\\       \email{takuyanakagawa73@gmail.com}           %  \\
%             \emph{Present address:} of F. Author  %  if needed
%}

%\date{Received: date / Accepted: date}
% The correct dates will be entered by the editor

\maketitle

\begin{abstract}
This paper establishes a quantitative stability theory for one-dimensional stochastic differential equations (SDEs) with non-zero drift, driven by a symmetric $\alpha$-stable process for $\alpha\in(1,2)$.
Our work generalizes the classical pathwise comparison method, pioneered by Komatsu for uniqueness problems, to address the stability of SDEs featuring both non-zero drift and, crucially, time-dependent coefficients.
We provide the first explicit convergence rates for this broad class of SDEs.
The main result is a H\"older-type estimate for the $L^{\alpha-1}(\Omega)$ distance between two solution paths, which quantifies the stability with respect to the initial values and coefficients.
A key innovation of our approach is the measurement of the distance between coefficients.
Instead of using a standard supremum norm, which would impose restrictive conditions, we introduce a weighted integral norm constructed from the transition probability density of the baseline solution.
This technique, which generalizes the framework of Nakagawa \cite{Nakagawa}, is essential for handling time-dependent perturbations and effectively localizes the error analysis.
The proof is based on a refined analysis of a mollified auxiliary function, for which we establish a new, sharper derivative estimate to control the drift terms.
Finally, we apply these stability results to derive corresponding convergence rates in probability, providing an upper bound for the tail probability of the uniform distance between solution paths.\\
\textbf{Keywords}: symmetric $\alpha$-stable, \and stability problem, \and non-Lipschitz coefficient, \and pathwise uniqueness, \and error estimation\\
\textbf{2020 Mathematics Subject Classification}: 60H10; 60G52; 60H07; 60H50
% \PACS{PACS code1 \and PACS code2 \and more}
% \subclass{MSC code1 \and MSC code2 \and more}
\end{abstract}

\section{Introduction}
Let $Z=(Z_t)_{0\le t\le T}$ be a symmetric $\alpha$-stable process for $\alpha\in(1,2)$.
Its infinitesimal generator, $L_\alpha$, is defined for any Schwartz function $f \in S(\real)$ by
\begin{align*}
    L_{\alpha} f(x)&\coloneqq \int_{\real \setminus \{0\}} \left\{f(x+y)-f(x)-1_{\{|y| \le 1 \}}yf'(x) \right\} \frac{c_\alpha\rd y}{|y|^{1+\alpha}}\textrm{~~for~} x\in\real, 
\end{align*}
where the normalization constant is $c_\alpha=\pi^{-1}\Gamma(\alpha+1)\sin\left(\frac{\alpha\pi}{2}\right)$ (See, e.g., equation (3.18) in \cite{Applebaum}).
In this paper, we study the stability of solutions to the following one-dimensional SDE, which will serve as our baseline process:
%In this paper, we study the stability of solutions to the following one-dimensional SDE, which will serve as our baseline process:
%In this paper, we study solutions to the following one-dimensional SDE driven by $Z$:
\begin{align}
X_t=x_0 + \int_0^t b(X_s) \rd s +\int_0^t \sigma (X_{s-}) \rd Z_s, \label{SDE}
\end{align}
where the solution $X=(X_t)_{0\le t\le T}$ starts from $x_0 \in \real$ and evolves according to the drift coefficient $b:\real\to\real$ and the jump coefficient $\sigma:\real\to\real$.
Our primary goal is to quantitatively evaluate the distance between the solution X of SDE \eqref{SDE} and a solution $\tilde{X}$ to a more general SDE, where the drift and jump coefficients, $\tilde{b}:[0,\infty)\times \real\to \real$ and $\tilde{\sigma}:[0,\infty)\times \real\to \real$, are permitted to be time-dependent.
This framework allows us to analyze the stability of the process under a much broader class of perturbations.

To place our problem in context, it is helpful to first recall the classical case of Brownian motion $(\alpha=2)$.
In this setting, the seminal work of Yamada and Watanabe \cite{Watanabe} established pathwise uniqueness of solutions under the condition that the diffusion coefficient $\sigma$ is $(1/2)$-H\"older continuous.
The study of pathwise uniqueness has been actively pursued for SDEs driven by symmetric $\alpha$-stable processes as well.
For the one-dimensional case, foundational results were established by Komatsu \cite{Komatsu} and Bass et al. \cite{Bass}.
Their work demonstrated that pathwise uniqueness holds under a H\"older continuity condition on the coefficient $\sigma$ of order $1/\alpha$, which is the natural analogue of the Brownian case.
%For the one-dimensional case, Komatsu \cite{Komatsu} provided a foundational result, proving pathwise uniqueness under the condition that $\sigma$ is $(1/\alpha)$-H\"older continuous.
This line of research was extended to the multi-dimensional setting by Tsuchiya \cite{Tsuchiya2}, who also considered non-Lipschitz coefficients.
Further generalizations have explored different aspects of the problem.
For instance, Fournier \cite{Fournier} established uniqueness conditions under various assumptions on the process's asymmetry and the coefficients' regularity and monotonicity.
The case where the stability index is small ($\alpha\in(0,1]$) was specifically addressed by Tsukada \cite{Tsukada}.
Beyond pathwise uniqueness, Kulik \cite{KULIK2} investigated the existence and properties of the unique weak solution, including the Gaussian bounds of its density function.
These studies on the existence and uniqueness of solutions naturally lead to the stability problem, which questions how the solution path changes when the coefficients of the SDE are perturbed. This problem is closely related to the analysis of numerical schemes, where, for instance, a series of studies on the convergence rate of Euler-Maruyama-type approximations have provided quantitative estimates under various coefficient regularities (\cite{Hashimoto}, \cite{Kaneko}, \cite{MiXu18}, \cite{PaTa17} and \cite{Yamada}, et al.).

Motivated by these quantitative approaches, this paper focuses directly on the stability problem.
A common application of stability results is to analyze the convergence of numerical schemes or approximation sequences.
For instance, one might consider a sequence of SDEs alongside, where for each $n\in\n$, $X^{(n)}$ is a solution to an approximating SDE:
\begin{align}
X_t^{(n)}=x_0^{(n)} + \int_0^t b_n\left(X_s^{(n)}\right) \rd s + \int_0^t \sigma_n \left(X_{s-}^{(n)}\right) \rd Z_s, \label{SDEn}
\end{align}
where $x_0^{(n)}$, $b_n:\real\to\real$ and $\sigma_n:\real\to\real$ are approximations of the original coefficients.

Quantifying the distance between the solutions $X$ of \eqref{SDE} and $X^{(n)}$ of \eqref{SDEn} is known as the stability problem.
While this paper's main theorems are formulated for two general processes, these results can be directly applied to analyze the convergence of such sequences.
%Motivated by these quantitative approaches, this paper focuses directly on the stability problem. To formulate this, we consider a sequence of SDEs alongside \eqref{SDE}.
%For each $n\in\n$, let $X^{(n)}\coloneqq \left(X_t^{(n)}\right)_{0\le t \le T}$ be the solution to 
%
%Quantifying the distance between the solutions $X$ of \eqref{SDE} and $X^{(n)}$ of \eqref{SDEn} is known as the stability problem.
This problem has a rich history.
For SDEs driven by semimartingales with Lipschitz coefficients, foundational stability results were provided by \'Emery \cite{Emery} and Protter \cite{Philip}.
The analysis was extended to the non-Lipschitz setting for Brownian-driven SDEs by Kawabata and Yamada \cite{Kawabata}.

Our work focuses on the stability of SDEs driven by $\alpha$-stable processes, a setting that introduces unique challenges due to the presence of jumps.
A significant portion of the existing literature has concentrated on the zero-drift case ($b=0$).
In this context, Hashimoto \cite{Hashimoto} proved the convergence in expectation of the time-supremum norm for a sequence of coefficients $(\sigma_n)_{n\in\n}$ converging uniformly to $\sigma$ under a Komatsu-type condition, though without providing convergence rates.
Subsequently, Hashimoto and Tsuchiya \cite{Tsuchiya} established explicit convergence rates in the $L^\infty$-norm.

Building upon these works, the present author in \cite{Nakagawa} extended the analysis by deriving convergence rates under a $L^p\left(\mu_{x_0,\sigma}^{\alpha,T}\right)$-norm. 
This norm, which will be defined precisely below, is designed to capture the behavior of the process more effectively than the standard $L^\infty$-norm.

While the zero-drift case is well-understood, the stability problem for non-zero drift is more complex.
A notable recent contribution is by Hao and Wu \cite{HaoWu}.
They established a quantitative stability estimate for SDEs with irregular drifts driven by a cylindrical $\alpha$-stable process, where the distance between solutions is measured in the total variation distance of the laws. Their framework accommodates a general class of multiplicative noise.
Our work differs from and complements the results in two significant aspects. First, whereas their analysis focuses on perturbations in the drift coefficient, our framework treats perturbations in both the drift and the jump coefficients simultaneously. Second, and more fundamentally, we establish stability in terms of the pathwise $L^{\alpha-1}(\Omega)$ distance between two solutions, which is a stronger notion of convergence than the convergence of laws. Our result thus provides a different type of quantitative estimate that captures the path-by-path proximity of solutions, which is often crucial for applications such as numerical scheme analysis.

The primary objective of this paper is to establish a comprehensive stability result that not only quantifies the impact of perturbations in the initial value $x_0$ and coefficients but also extends the analysis to a significantly broader and more practical class of perturbations.
%The primary objective of this paper is to establish a comprehensive stability result that simultaneously quantifies the impact of perturbations in the initial value $x_0$, as well as in the drift and jump coefficients. 
Crucially, our framework extends previous results by considering perturbations to coefficients that are time-dependent, thus providing a more robust analysis of the solution's stability.
%The primary objective of this paper is to establish a comprehensive stability result that simultaneously quantifies the impact of perturbations in the initial value $x_0$, the drift coefficient $b$, and the jump coefficient $\sigma$.
We extend the framework developed in \cite{Nakagawa} for the zero-drift case, aiming to derive convergence rates in a $L^p\left(\mu_{x_0,\sigma}^{\alpha,T}\right)$-norm that is well-suited to the process's behavior.

To this end, we measure the distance between coefficients using a specific weighted norm.
The motivation for this choice is to assign more weight to the region near the initial value $x_0$, where the process is most likely to be found, especially for small times.
We therefore introduce the finite measure $\mu_{x_0,\sigma}^{\alpha,t}$, defined by
\begin{align*}
\mu_{x_0,\sigma}^{\alpha,t}(\rd y)\coloneqq\frac{1}{t^{1/\alpha} \sigma^{1/\alpha}(y) }g^{(\alpha)} \left(\frac{y-x_0}{t^{1/\alpha} \sigma^{1/\alpha} (y)} \right)\rd y,
\end{align*}
where $g^{(\alpha)}$ is the density function of the stable random variable $Z_1$.
This function is globally comparable to a power-law function.
Specifically, there exist positive constants $c$ and $C$ such that for all $y\in\real$:
\begin{align*}
    c (|y|\vee 1)^{-1-\alpha} \le g^{(\alpha)}(y) \le C (|y|\vee 1)^{-1-\alpha}.
\end{align*}
We denote this relationship by $g^{(\alpha)}(y)\asymp (|y|\vee 1)^{-1-\alpha}$.
This measure $\mu_{x_0,\sigma}^{\alpha,t}$ is constructed to be directly comparable to the principal part of the transition density, $p_t^0(x_0, y)$ (see Subsection \ref{SDE_density_section}). Indeed, both share the same fundamental structure, incorporating the space-dependent coefficient $\sigma$. A key feature of this measure is that it decays as the distance $|y-x_0|$ increases. This follows from the decay property of $g^{(\alpha)}$ and the assumption that $\sigma$ is uniformly bounded from above and below, which ensures the argument of $g^{(\alpha)}$ grows linearly with $|y-x_0|$. This decay property ensures that $L^p\left(\mu_{x_0,\sigma}^{\alpha,t}\right)$-norm, defined below, effectively localizes the error analysis to the region near the initial point $x_0$.
%This measure is constructed from the principal part of the transition density and naturally decreases as the distance from $x_0$ increases.
%The corresponding $L^{p}\left(\mu_{x_0,\sigma}^{\alpha,t}\right)$-norm for the coefficient difference is then given by
\begin{align}
\|f\|_{L^p\left(\mu_{x_0,\sigma}^{\alpha,t}\right)}\coloneqq \left(\int_\real |f(y)|^p \mu_{x_0,\sigma}^{\alpha,t}(\rd y)\right)^{1/p}, \label{weighted_norm}
\end{align}
for any suitable function $f$ and $p,t>0$.
Our main theorem will bound the distance between two solutions in terms of the difference in initial values and this weighted norm for the coefficients.

While these previous results have provided crucial insights, their applicability remains significantly limited by the prevailing assumptions of a zero drift term and time-homogeneous coefficients. Overcoming these restrictions is not merely a matter of technical generalization but a necessary step to align the theory with both practical applications and deeper mathematical challenges. Many real-world phenomena modeled by SDEs—from mean-reverting financial assets to complex systems in physics and biology—exhibit a deterministic tendency or "drift" that fundamentally governs their qualitative behavior. A robust stability theory must therefore be able to incorporate non-zero drift to be practically meaningful. Similarly, the assumption of time-constant coefficients is an oversimplification for most non-autonomous systems, where environmental factors like volatility in finance or external forces in engineering naturally vary over time.
The inclusion of these features, however, presents substantial technical hurdles. The introduction of a drift term, in particular, creates additional integral terms within the classical Komatsu-type analysis that cannot be controlled by the estimates sufficient for the zero-drift case. Furthermore, handling time-dependent coefficients requires a conceptual shift in how the distance between functions is measured; a transient, large difference between coefficients should not necessarily imply instability, a subtlety that cannot be captured by simple supremum norms. This necessitates the development of a more flexible, integral-based measure of distance that is sensitive to the underlying dynamics of the process.

Our work directly confronts these challenges by developing a unified framework that successfully incorporates both non-zero drift and time-dependent coefficients.
%Our methodological approach builds upon a framework developed by the present author in \cite{Nakagawa} for the zero-drift case.
This framework integrates the pioneering method of Komatsu \cite{Komatsu} for pathwise uniqueness with transition density estimates from Knopova and Kulik \cite{KULIK}, using a smooth approximation $\ude$ of the function $u(x)=|x|^{\alpha-1}$.
The key technical innovation of this paper is the extension of this method to handle SDEs with non-zero and time-dependent drift terms.
This extension is non-trivial.
While the original framework was effective for the zero-drift case (see \cite{Nakagawa}), the introduction of a drift term creates additional terms that cannot be controlled by existing estimates.
The genuine novelty of this paper lies in two main contributions that overcome this challenge.
First, and most critically, we establish a new, sharper derivative estimate for the mollified auxiliary function (inequality \eqref{ude'ineq}). This inequality is the technical core of our work. It provides a refined bound that is essential for decomposing the drift term into a manageable, recursive part and a part related to the direct coefficient difference. Without this estimate, the application of Gronwall's inequality to the recursive drift term would not be possible, and the entire stability argument would fail.
Second, building on this new estimate, we introduce a strategic optimization procedure for the auxiliary parameters $\eps>0$ and $\delta>1$.
These parameters are chosen not as constants, but as functions of the coefficient differences themselves.
This dynamic choice is essential for balancing the diverging terms in our stability inequality, ultimately leading to the explicit convergence rates in our main theorem.
%The key technical innovation of this paper is to extend this method to handle SDEs with non-zero and time-dependent drift terms.
%The key technical innovation of this paper is to extend this method to handle SDEs with non-zero drift terms.
%While the previous work relied on established estimates for the approximation $u_{\delta,\varepsilon}$ and its first derivative, these are insufficient in the presence of a drift term.
%Our main contribution lies in establishing a new, sharper estimate for the derivative, $|\ude'|$ (see inequality \eqref{ude'ineq}), which is crucial for controlling the additional terms arising from the drift.
%However, this new estimate alone is not sufficient. The resulting stability inequality involves auxiliary parameters $\eps>0$ and $\delta>1$, and contains terms that diverge as these parameters approach their limits.
%Our second, equally crucial, contribution is to demonstrate how to strategically choose these parameters as functions of the magnitude of the coefficient differences.
%This optimization procedure effectively balances the diverging terms against the smallness of the perturbation.
%This refined, two-step approach—a sharper estimate combined with a strategic parameter optimization—enables a unified treatment of both drift and jump coefficients, allowing us to derive the quantitative stability bounds that are the main results of this paper.
This refined approach enables a unified treatment of both drift and jump coefficients, allowing us to derive quantitative stability bounds that hold beyond the zero-drift and time-independent setting.
Furthermore, we apply this stability analysis to investigate convergence in probability, leveraging properties of quasi-martingales \cite{Kurtz} to establish a rate of uniform convergence between solution paths in probability.
It is important to note the nature of our stability result.
Our framework is intentionally asymmetric: we begin with a well-understood, time-homogeneous baseline process and quantify its stability under time-dependent perturbations.
This approach models the common practical question of how a standard, established model behaves when subjected to more complex, non-autonomous influences.
Consequently, our main theorem should be interpreted as a sensitivity estimate for a baseline model against a broad class of time-dependent perturbations.

The remainder of this paper is organized as follows.
Section \ref{Notaion} establishes the notation and preliminary results used throughout the paper. We recall key estimates for the transition density of the solution to SDE \eqref{SDE}, based on the work of Knopova and Kulik \cite{KULIK}.
In Section \ref{distance}, we present and prove our first main result: a quantitative estimate for the $L^{\alpha-1}(\Omega)$ distance between two solutions. This section contains the core of our technical analysis, including the proof that relies on the new estimate for the approximating function.
Section \ref{distanceprob} is devoted to our second main result, which addresses the stability of solutions in the sense of convergence in probability. We introduce the concept of quasi-martingales and apply their properties to derive a rate of uniform convergence.
Finally, Section \ref{Appendices} contains the proofs of several technical lemmas that are essential for the arguments in the main sections, including the detailed derivation of our new estimate for the derivative of the mollified function.

\section{Preliminaries}\label{Notaion}
This section introduces the notation and foundational concepts used throughout the paper.

\subsection{General notation}
We begin by setting some general mathematical notation. For any real numbers $a, b$, we write $a\wedge b= \min\{a,b\}$ and $a\vee b= \max\{a,b\}$. The gamma function is denoted by $\Gamma(z)=\int_0^\infty x^{z-1}e^{-x}\rd x$ for $z>0$. For a real-valued function $f$, its uniform norm is given by $\displaystyle\|f\|_{\infty}=\sup_{x\in\real}|f(x)|$. The convolution of two functions $f$ and $g$ is written as $(f\ast g)(x) =\int_{\real} f(y)g(x-y)dy$.
%The abbreviation (P.V.) stands for the Cauchy principal value of an integral.
%For measures, we write $\mu \ll \mu'$ to indicate that $\mu$ is absolutely continuous with respect to $\mu'$. The Lebesgue measure on $\real$ is denoted by Leb.
%\textcolor{red}{$g^{(\alpha)}(y) \asymp (|y|\vee 1)^{-1-\alpha}$}NO KOTO KAKU
\subsection{The symmetric $\alpha$-stable process}
The driving noise in our SDE is a symmetric $\alpha$-stable process $Z = \{Z_t\}_{t\in [0,T]}$ with a stability index $\alpha\in(1,2)$. We summarize its essential properties below.
The jump of the process at time $t>0$ is denoted by $\Delta Z_t \coloneqq Z_t-Z_{t-}$, with $\Delta Z_0\coloneqq 0$. The process $Z$ can be characterized by its jump measure. The associated Poisson random measure $N$ on $\mathcal{B}([0,T])\times \mathcal{B}(\real\setminus\{0\})$ is defined by counting the jumps:
$N(t,F) \coloneqq \sum_{0< s\le t} 1_F(\Delta Z_s)$ for $F\in \mathcal{B}(\real\setminus\{0\})$.
The intensity of these jumps is described by the L\'evy measure $\nu$, which is given by the expectation $\nu(F) \coloneqq \mathbb{E}[N(1,F)]$. Throughout this paper, the L\'evy measure of $Z$ is $\nu(\rd z) = c_\alpha |z|^{-1-\alpha} \rd z$. The compensated Poisson random measure, denoted by $\tilde{N}$, is then defined as the difference $\tilde{N}(\rd z,\rd s) \coloneqq N(\rd z,\rd s)-\nu(\rd z)\rd s$.

\subsection{The transition density function and its estimates for SDE \eqref{SDE}}\label{SDE_density_section}
Our stability analysis in Section \ref{distance} will require relating the expectation of coefficient differences to a weighted norm.
The key to this connection is the transition probability density $p_t(x_0,\cdot)$ of the baseline SDE \eqref{SDE}.
Therefore, we now introduce the essential results regarding the existence and properties of this density, established by Knopova and Kulik \cite{KULIK}.
%To evaluate the norm of the coefficient differences, we introduce the key results regarding the transition probability density $p_t(x_0,y)$ of SDE \eqref{SDE}, established by Knopova and Kulik \cite{KULIK} in.
For our setting where $\alpha\in(1,2)$, we adopt the assumptions of their Case A: the jump coefficient $\sigma$ is assumed to be strictly positive, bounded and H\"older continuous, while the drift coefficient $b$ is assumed to be continuous and bounded.
It shows that the density can be written as the sum of a principal part $p_t^0\left(x_0,y\right)$ and a residue term $r_t\left(x_0,y\right)$:
\begin{align}
    p_t\left(x_0,y\right)=p_t^0\left(x_0,y\right)+r_t\left(x_0,y\right). \label{SDE_density}
\end{align}
The principal part $p_t^0(x_0,\cdot)$ is the "frozen-coefficient" approximation, defined by
\begin{align*}
    p_t^0\left(x_0,y\right)\coloneqq \frac{1}{t^{1/\alpha} \sigma(y)^{1/\alpha}} g^{(\alpha)}\left( \frac{y-x_0}{t^{1/\alpha} \sigma(y)^{1/\alpha}} \right).
\end{align*}
Here, $g^{(\alpha)}$ is the density function of $Z_1$.
The theorem further provides a crucial estimate for the residue term $r_t$, controlled by
\begin{align*}
    \left|r_t\left(x_0,y\right)\right|\le C p_t^0\left(x_0,y\right)V_t\left(y-x_0\right),\quad t\in(0,T],\ y\in\real,
\end{align*}
where $C$ is a constant and for any $\kappa\in(0,\eta)$, $V_t$ is a weight function defined by
\begin{align*}
    V_t(z) = 
    \begin{cases} 
    t^{\kappa/\alpha} + t^{\kappa}, & \textrm{if } |z| \le t^{1/\alpha}, \\
    |z|^{\kappa} + t^{\kappa}, & \textrm{if } t^{1/\alpha} < |z| \le 1, \\
    1 + t^{\kappa}, & \textrm{if } |z| > 1.
    \end{cases}
\end{align*} 
As a direct consequence of this decomposition $p_t = p_t^0 + r_t$ and the estimate on the residue term, it follows that the density $p_t$ is of the same order as its principal part $p_t^0$.
Indeed, this is stated more directly as a two-sided estimate in Theorem 2.5 in \cite{KULIK}, which shows that for some constants $m, M > 0$,
\begin{align}
    m p_t^0\left(x_0,y\right)\le p_t\left(x_0,y\right) \le M p_t^0\left(x_0,y\right) \label{SDE_density_estimate}
\end{align}
holds for $t\in(0,T]$.
The upper bound provided by this estimate will be a convenient tool in our proofs of Theorem \ref{mainthm}, \ref{mainthm2}.
Given the asymptotic behavior $g^{(\alpha)}(y) \asymp (|y|\vee 1)^{-1-\alpha}$, we can replace $g^{(\alpha)}$ in the preceding bounds with the function $G^{(\alpha)}(y) = (|y|\vee 1)^{-1-\alpha}$ to obtain more explicit estimates (see Remark 2.4 in \cite{KULIK}).
%This upper bound for the density $p_t(x_0,\cdot)$ is useful for proving Theorem \ref{mainthm}, \ref{mainthm2}.
%the main theorem of this article.

\section{Distance between solutions in the $L^{\alpha-1}(\Omega)$-norm}\label{distance}
In this section, we present our first main result: a quantitative stability estimate for the solutions of SDE \eqref{SDE}.
Our result generalizes previous findings in two significant directions.
First, whereas Hashimoto and Tsuchiya \cite{Tsuchiya} measured the distance between coefficients using the supremum norm, our estimate is formulated in terms of the more refined $L^p\left(\mu_{x_0,\sigma}^{\alpha,T}\right)$-norm introduced in the previous section.
Second, our framework incorporates non-zero drift terms, extending the zero-drift analysis from \cite{Nakagawa}.
The proof strategy involves a delicate balancing act.
We will employ Komatsu's method, which uses an auxiliary function $\ude$ controlled by parameters $\eps$ and $\delta$.
Applying It\^o's formula yields a fundamental inequality, but this inequality contains coefficients that diverge as $\eps \to +0$ or $\delta \to \infty$.
The core idea of our proof is to strategically choose these parameters as functions of the norms measuring the differences between the coefficients.
This optimization process balances the diverging terms against the smallness of the errors, and the explicit convergence rates presented in the following theorem are a direct consequence of this procedure.
In addition, it is important to emphasize that the restriction $\alpha > 1$ is not merely a consequence of the cited transition density theory introduced in Subsection \ref{SDE_density_section}, but is intrinsic to our Komatsu-type approach. Our proof is built upon an auxiliary function, $\ude$, which is a mollified version of the function $u(x)=|x|^{\alpha-1}$. The entire framework relies on the exponent $\alpha-1$ being positive, which ensures that this underlying function is regular at the origin and suitable for measuring pathwise distance. The case $\alpha \le 1$, known as the supercritical case, would require a fundamentally different methodology and is thus outside the scope of this paper.

We are now in a position to state our first main result.
The following theorem provides an explicit bound on the $L^{\alpha-1}(\Omega)$ distance between the solution $X$ of our baseline SDE \eqref{SDE} and the solution $\tilde{X}$ of a more general SDE with time-dependent coefficients.
%The following theorem provides an explicit bound on the $L^{\alpha-1}(\Omega)$ distance between two solutions.
As a consistency check, we note that in the special case where $b=\tilde{b}=0$, our result recovers the convergence rate established in \cite[Theorem 31]{Nakagawa}.

\begin{theorem}\label{mainthm}
Let $Z$ be a one-dimensional symmetric $\alpha$-stable process with $\alpha \in (1,2)$. Consider two processes, $X$ and $\tilde{X}$, satisfying the SDEs
\begin{align*}
X_t = x_0 + \int_0^t b(X_s) \rd s + \int_0^t \sigma(X_{s-}) \rd Z_s \quad \text{and} \quad \tilde{X}_t = \tilde{x}_0 + \int_0^t \tilde{b}(s,\tilde{X}_s) \rd s +  \int_0^t \tilde{\sigma}(s,\tilde{X}_{s-}) \rd Z_s,
\end{align*}
for $t\in[0,T]$ with initial values $x_0, \tilde{x}_0\in\real$.

Suppose the coefficients satisfy the following conditions for some constants $K, k, K'>0$ and $\tilde{\eta} \in [1/\alpha,1]$:
\begin{itemize}
    \item[(i)] Drift coefficients:
    The coefficient $b$ of the baseline process $X$ is continuous and bounded: $|b(x)|\le K$ for all $x\in\real$.
    The coefficient $\tilde{b}$ of the perturbed process $\tilde{X}$ is spatially Lipschitz continuous with a time-dependent Lipschitz constant $f_{\tilde{b}}$ that is integrable over $[0,T]$:
    \begin{align*}
       \left|\tilde{b}(t,x)-\tilde{b}(t,y)\right|\le f_{\tilde{b}}(t) |x-y|, \quad \int_0^T f_{\tilde{b}}(s) \rd s<\infty. 
    \end{align*}
    \item[(ii)] Jump coefficients:
    The coefficient $\sigma$ of $X$ is strictly positive, bounded, and H\"older continuous:
    \begin{align*}
        k\le \sigma(x)\le K,\quad |\sigma^\alpha(x)-\sigma^\alpha(y)|\le K|x-y|^\eta.
    \end{align*}
    %The coefficient $\tilde{\sigma}$ of $\tilde{X}$ is bounded by a time-depended function $g_{\tilde{\sigma}}$ 
    The coefficient $\tilde{\sigma}$ of the perturbed process $\tilde{X}$ satisfies the following conditions.
    It is bounded in magnitude by a function $g_{\tilde{\sigma}}$ that is square-integrable on $[0, T]$:
    \begin{align*}
        |\tilde{\sigma}(t,x)|\le g_{\tilde{\sigma}}(t), \quad
        \left(\int_0^T g_{\tilde{\sigma}}^2(s) \rd s\right)^{1/2}<\infty.
    \end{align*}
    Furthermore, it is spatially $\tilde{\eta}$-H\"older continuous with a time-dependent Lipschitz constant $f_{\tilde{\sigma}}$ that belongs to $L^\alpha[0,T]$: 
    %The jump coefficients $\sigma$ and $\tilde{\sigma}$ are bounded and H\"older continuous, and $\sigma$ is strictly positive: there exists a nonnegative function $f_{\tilde{\sigma}}$ and $g_{\tilde{\sigma}}$ such that
    \begin{align*}
       %k \le \sigma(x)\le K, \quad |\sigma^\alpha(x)-\sigma^\alpha(y)|\le K|x-y|^\eta,\\
       %|\tilde{\sigma}(t,x)|\le g_{\tilde{\sigma}}(t), \quad 
       |\tilde{\sigma}(t,x)-\tilde{\sigma}(t,y)|\le f_{\tilde{\sigma}}(t)|x-y|^{\tilde{\eta}},\quad
       \left(\int_0^T f_{\tilde{\sigma}}^\alpha(s) \rd s\right)^{1/\alpha}<\infty
    \end{align*}
    \item[(iii)] The distances between the coefficients are small. 
    These distances are measured by the quantities $B$ and $S$, defined by integrals weighted by the transition probability density $p_t(x_0,\cdot)$ of the baseline process $X$ (as described in \eqref{SDE_density}):
    \begin{align*}
       B \coloneqq \int_0^T \int_\real \left|b(y)-\tilde{b}(s,y)\right| p_s\left(x_0,y\right) \rd s \rd y < 1, \quad S \coloneqq \left(\int_0^T \int_\real \left|\sigma(y)-\tilde{\sigma}(s,y)\right|^\alpha p_s\left(x_0,y\right) \rd s \rd y \right)^{1/\alpha} < 1. 
    \end{align*}
\end{itemize}
Then, there exists a positive constant $C = C\left(T, \alpha, K, k, \int_0^T f_{\tilde{b}}(s) \rd s, \int_0^T f_{\tilde{\sigma}}^\alpha(s) \rd s\right)$ such that
\begin{align*}
\sup_{t\in[0,T]}\mathbb{E}\left[|X_t-\tilde{X}_t|^{\alpha-1}\right] \le 
\begin{cases}
    C\left(|x_0-\tilde{x}_0|^{\alpha-1}+ \max\left\{B^{(\alpha\tilde{\eta}-1)/(\alpha\tilde{\eta}-\alpha+1)}, S^{\alpha-1/\tilde{\eta}}\right\}\right\} & \text{if } \tilde{\eta} \in(1/\alpha,1], \\
	C\left(|x_0-\tilde{x}_0|^{\alpha-1}+ \left(\log\frac{1}{\max\{B, S\}}\right)^{-1}\right) & \text{if } \tilde{\eta} = 1/\alpha.
\end{cases}
\end{align*}
\end{theorem}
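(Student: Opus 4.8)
The plan is to follow Komatsu's pathwise-comparison scheme, now carrying along the drift terms. First I would introduce the mollified function $\ude$, a smooth approximation to $u(x)=|x|^{\alpha-1}$ depending on the small parameter $\eps>0$ and the large parameter $\delta>1$, and apply It\^o's formula to $\ude(X_t-\tilde X_t)$. Writing $Y_t \coloneqq X_t-\tilde X_t$, the semimartingale decomposition of $Y$ has a finite-variation part coming from $\int_0^t \bigl(b(X_s)-\tilde b(s,\tilde X_s)\bigr)\rd s$ and a jump part driven by $\int_0^t \bigl(\sigma(X_{s-})-\tilde\sigma(s,\tilde X_{s-})\bigr)\rd Z_s$. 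Taking expectations, the martingale part vanishes and one is left with a drift contribution $\e\!\int_0^t \ude'(Y_s)\bigl(b(X_s)-\tilde b(s,\tilde X_s)\bigr)\rd s$ and a jump/integro-differential contribution of the form $\e\!\int_0^t \int_{\real\setminus\{0\}} \bigl\{\ude(Y_s + (\sigma(X_{s-})-\tilde\sigma(s,\tilde X_{s-}))z) - \ude(Y_s) - (\sigma(X_{s-})-\tilde\sigma(s,\tilde X_{s-}))z\,\ude'(Y_s)\bigr\}\nu(\rd z)\,\rd s$. The Komatsu-type analysis of the jump term, using the design of $\ude$ and the two-sided density estimate \eqref{SDE_density_estimate} together with the $L^\alpha$-H\"older modulus $f_{\tilde\sigma}$, produces the term controlled by $\|\sigma-\tilde\sigma\|$-type quantities plus a Gronwall-compatible term $\int_0^t \e[u(Y_s)]\,(\text{something in }f_{\tilde\sigma}^\alpha, f_{\tilde b})\,\rd s$.

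Next I would split the drift difference as $b(X_s)-\tilde b(s,\tilde X_s) = \bigl(b(X_s)-\tilde b(s,X_s)\bigr) + \bigl(\tilde b(s,X_s)-\tilde b(s,\tilde X_s)\bigr)$. The second piece is Lipschitz in $|X_s-\tilde X_s|=|Y_s|$ with constant $f_{\tilde b}(s)$; combined with the key new derivative bound \eqref{ude'ineq} (which controls $|\ude'(y)|$ by something like a constant times $|y|^{\alpha-2}\wedge(\text{a }\delta,\eps\text{-dependent constant})$, so that $|\ude'(y)|\,|y| \lesssim u(y) + (\text{correction})$), this yields another Gronwall term $\int_0^t f_{\tilde b}(s)\,\bigl(\e[u(Y_s)] + \text{correction}(\eps,\delta)\bigr)\rd s$. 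The first piece $b(X_s)-\tilde b(s,X_s)$ is the genuine coefficient perturbation: here I would bound $|\ude'(Y_s)|$ by its sup-type bound in terms of $\eps$ and $\delta$, pull the $X_s$-expectation onto the density $p_s(x_0,\cdot)$ of the baseline process via the change-of-variables/occupation argument justified by \eqref{SDE_density}–\eqref{SDE_density_estimate}, and recognize the resulting quantity as (a constant times) $B$ or $S$ up to the $\eps,\delta$-dependent prefactors. This is exactly where the weighted measure $\mu_{x_0,\sigma}^{\alpha,T}$ and the hypotheses $B,S<1$ enter.

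At this stage I would have a master inequality of the schematic form
\[
\sup_{t\le T}\e[u(Y_t)] \;\le\; C\Bigl( u(x_0-\tilde x_0) + \Phi_1(\eps,\delta)\,B + \Phi_2(\eps,\delta)\,S + \Psi(\eps,\delta) + \int_0^T \bigl(f_{\tilde b}(s)+f_{\tilde\sigma}^\alpha(s)+1\bigr)\sup_{r\le s}\e[u(Y_r)]\,\rd s \Bigr),
\]
where $\Phi_1,\Phi_2$ blow up as $\eps\to0$ or $\delta\to\infty$ while the "mollification defect" $\Psi(\eps,\delta)$ (the price of replacing $u$ by $\ude$ and of the correction terms) shrinks — typically $\Psi$ is something like $\eps^{\alpha-1} + \delta^{-(\text{positive})}$ and $\Phi_i$ like $\eps^{-(\text{something})}\delta^{(\text{something})}$. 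Applying Gronwall's inequality absorbs the last integral into the constant $C=C(T,\alpha,K,k,\int f_{\tilde b},\int f_{\tilde\sigma}^\alpha)$. The final step — and I expect this to be the main obstacle, since it is the crux of the quantitative statement — is to optimize over $\eps$ and $\delta$: choosing $\eps$ and $\delta$ as explicit powers of $\max\{B,S\}$ so as to equalize the growing terms $\Phi_i\cdot(B\text{ or }S)$ against the decaying defect $\Psi$. Carrying out this optimization, the exponents $(\alpha\tilde\eta-1)/(\alpha\tilde\eta-\alpha+1)$ and $\alpha-1/\tilde\eta$ emerge from the algebra for $\tilde\eta\in(1/\alpha,1]$; when $\tilde\eta=1/\alpha$ the two competing powers degenerate to the same rate and the best trade-off is logarithmic, giving the $(\log(1/\max\{B,S\}))^{-1}$ bound. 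The delicate points within this obstacle are getting the sharp form of \eqref{ude'ineq} (without it the drift term cannot be absorbed), correctly tracking how $\tilde\eta<1$ weakens the jump estimate relative to the $\tilde\eta=1$ case, and checking that the chosen $\eps,\delta$ remain in their admissible ranges ($\eps$ small, $\delta>1$) precisely because $B,S<1$.
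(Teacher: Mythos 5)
Your proposal matches the paper's proof: It\^o's formula applied to the mollified $\ude(Y_t)$ with $Y_t=X_t-\tilde X_t$, the drift difference split into the coefficient-perturbation piece (controlled via \eqref{ude'ineq} and Fubini against $p_s(x_0,\cdot)$ to produce $B$) and the Lipschitz piece (giving a Gronwall term), the jump compensator treated as in the zero-drift case to produce $S$, and a final optimization over $\eps,\delta$ as powers of $\max\{B,S\}$ with the case split on $\tilde\eta$. All the mechanisms you flag as essential — the sharp bound on $|\ude'|$ needed to absorb the drift, the trade-off between the diverging prefactors and the $\eps^{\alpha-1}$ mollification defect, the admissibility of the parameter choices coming from $B,S<1$, and the logarithmic degeneration at $\tilde\eta=1/\alpha$ — are precisely those the paper relies on.
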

It is important to interpret the stability estimate of Theorem \ref{mainthm} as a measure of overall stability across the entire time horizon. The resulting estimate does not describe how the error evolves from moment to moment, but rather captures the total accumulated effect of the coefficient perturbations over the whole interval $[0, T]$.
We add several remarks concerning the statement and assumptions of Theorem \ref{mainthm}.
\begin{remark}
\begin{enumerate}
    \item \textbf{(On the interpretation of the coefficient distances $B$ and $S$)}
    It is important to clarify the interpretation of the quantities $B$ and $S$. They do not measure the uniform closeness of the coefficients over all space and time (such as in a supremum norm). Rather, they define a notion of distance that is weighted by the law of the baseline process $X$.
    This feature of the framework is intentional and particularly effective for handling time-dependent perturbations. It implies that the pointwise differences between the coefficients $(b,\sigma)$ and $(\tilde{b},\tilde{\sigma})$ are permitted to be large, so long as these differences occur in regions where the baseline process $X$ is unlikely to be found (i.e., where its transition density $p_t(x_0, \cdot)$ is small). In essence, $B$ and $S$ measure the discrepancy between the coefficients primarily in the regions most relevant to the dynamics of the baseline process.
    Consequently, the theorem should be understood not as a constructive approximation result, but as a robustness statement: it asserts that if the discrepancy between the two sets of coefficients is small in this weighted sense, then the corresponding solution paths will also be close. This notion of distance makes our stability result applicable to a wider class of perturbations than would be possible with a uniform measure of distance. In the specific time-homogeneous case considered in Corollary \ref{Cor_time-homogeneous}, these general quantities $B$ and $S$ naturally reduce to more standard weighted $L^p$-norms.
    \item \textbf{(On the coefficients of the baseline SDE \eqref{SDE})} The assumptions on the coefficient $\sigma$ (i.e., being strictly positive, bounded, and H\"older continuous) are required to guarantee the existence and two-sided estimates of its transition density function, as established in \cite{KULIK}. Note that the H\"older continuity of $\sigma^\alpha$ with exponent $\eta$ implies that $\sigma$ itself is H\"older continuous with exponent $\eta/\alpha$.
    %\item \textbf{(On the choice of norm)} The stability estimate in Theorem \ref{mainthm} also holds if the norms $\|\cdot\|_{L^1\left(\mu_{x_0,\sigma}^{\alpha,T}\right)}$ and $\|\cdot\|_{L^\alpha\left(\mu_{x_0,\sigma}^{\alpha,T}\right)}$ are replaced by the standard Lebesgue norms $\|\cdot\|_{L^1}$ and $\|\cdot\|_{L^\alpha}$, respectively. This is a direct consequence of the inequality $\|f\|_{L^p\left(\mu_{x_0,\sigma}^{\alpha,T}\right)} \le \|f\|_{L^p}$, which holds for any $f\in L^p(\mathbb{R})$. However, the weighted norm is specifically designed for this problem, as it decays with the distance from the initial point $x_0$, effectively localizing the error measure (see, e.g., [Nakagawa, Example 31]).
    %\item \textbf{(On the coefficients of the baseline SDE \eqref{SDE})} The assumptions on the coefficient $\sigma$ (i.e., being strictly positive, bounded, and H\"older continuous) are required to guarantee the existence and two-sided estimates of its transition density function, as established in \cite{KULIK}. Note that the H\"older continuity of $\sigma^\alpha$ with exponent $\eta$ implies that $\sigma$ itself is H\"older continuous with exponent $\eta/\alpha$.
    \item \textbf{(On the coefficients of the perturbed SDE)} 
    For the drift coefficient $\tilde{b}$, we require spatial Lipschitz continuity.
    While this is a stronger regularity condition compared to the H\"older continuity assumed for the $b$, we do not require $\tilde{b}$ to be bounded. 
    The upper bound on $|\tilde{\sigma}|$ by a square-integrable function $g_{\tilde{\sigma}}$ is a technical assumption needed to ensure that a certain stochastic integral term in the proof is a true martingale.
    %The upper bound $K'$ on $|\tilde{\sigma}|$ is a technical assumption needed to ensure that a certain stochastic integral term in the proof is a true martingale.
    The constant $C$ in our final estimate is independent of this upper bound.
    %For the drift coefficient $\tilde{b}$, we require Lipschitz continuity. While this is a stronger regularity condition compared to the H\"older continuity assumed for the jump coefficients, we do not require $\tilde{b}$ to be bounded.
    In addition to these conditions, if $\tilde{b}$ and $\tilde{\sigma}$ are time-independent, pathwise uniqueness of the solution $\tilde{X}$ is guaranteed (see, e.g., \cite{Fournier}).
    %Under these condition, pathwise uniqueness for the solution $\tilde{X}$ is guaranteed (see, e.g., \cite{Fournier}).
    \item \textbf{(Asymmetric structure around a baseline model)}
    The stability estimate is asymmetric. The roles of the baseline process $X$ and the perturbed process $\tilde{X}$ are not interchangeable. The distances between coefficients, $B$ and $S$, are always measured using the transition probability density $p_t(x_0,\cdot)$ of the baseline process $X$. This structure is intentional and reflects the core purpose of the theorem: to provide a sensitivity analysis of a well-understood, time-homogeneous model ($X$) against a wide range of time-dependent perturbations ($\tilde{X}$). This framework is particularly relevant in applications where one wants to assess the robustness of a standard model against realistic, non-autonomous environmental changes.
    \item \textbf{(On the solution framework)}
    Our main theorem provides a stability estimate between two solutions, $X$ and $\tilde{X}$. We clarify here the precise solution concepts and the underlying assumptions on their existence.
    \begin{itemize}
        \item For the baseline process $X$: Under our assumptions, the existence of a weak solution and its transition probability density is guaranteed by the work of Knopova and Kulik \cite{KULIK}. We do not assume pathwise uniqueness for $X$, as it is known to fail if the H\"older exponent $\eta<1/\alpha$ (see Bass et al. \cite{Bass}).
        \item For the perturbed process $\tilde{X}$: Our theorem is formulated under the hypothesis that a strong solution $\tilde{X}$ exists. The question of strong well-posedness for SDEs driven by stable processes with H\"older continuous coefficients is a highly non-trivial problem at the forefront of current research.
        To place this in context, recent significant results have been established under different sets of assumptions. For instance, Chen, et al. \cite{ChZhZh} proved strong well-posedness when the diffusion coefficient $\tilde{\sigma}$ is Lipschitz continuous and the drift $b$ is H\"older continuous (with exponent $\beta>1-\alpha/2$). In the Brownian motion case $(\alpha=2)$, Hutzenthaler and Jentzen \cite{HuJe15} have shown the existence of strong solutions even when the diffusion coefficient is merely H\"older continuous. However, extending such results to the $\alpha$-stable case with a H\"older continuous diffusion coefficient remains a challenging open problem.
    \end{itemize}
    Therefore, our main result should be interpreted as follows: if a strong solution $\tilde{X}$ to the perturbed SDE exists on a given probability space, then for any weak solution $X$ of the baseline SDE on the same space, their distance is bounded by the explicit rate given in Theorem \ref{mainthm}. Proving the existence of $\tilde{X}$ under our specific, time-dependent H\"older assumptions is a substantial research problem in its own right and is considered beyond the scope of this paper.
\end{enumerate}
\end{remark}

\subsection{Komatsu's method and properties of the mollifying Function}
Our proof of Theorem \ref{mainthm} is based on a variation of a method originally developed by Komatsu \cite{Komatsu} for establishing pathwise uniqueness.
The core idea is to evaluate the quantity $\left|X_t - \tilde{X}_t\right|^{\alpha-1}$ not directly, but through a smooth approximation.

While pathwise uniqueness for multi-dimensional $\alpha$-stable driven SDEs has been studied (e.g., Tsuchiya \cite{Tsuchiya2}), extending our quantitative stability analysis to higher dimensions presents a significant challenge.
Tsuchiya's work also utilizes an approximation of the function $u(\mathbf{x})=|\mathbf{x}|^{\alpha-1}$, but the approximation is constructed via Bessel and hypergeometric functions, which is very different from our direct mollification approach.
A key technical innovation of our paper is the new, sharp derivative estimate for the mollified function (inequality \eqref{ude'ineq}). Although similar estimates are obtained using the approximation based on Bessel and hypergeometric functions, it fails to satisfy the localization property required in Lemma \ref{Lude}, which is essential for deriving the offset-free estimate of the difference in coefficients.
This limitation stems from the fact that the kernel of Riesz potential of $L_\alpha$ corresponds to $|\cdot|^{\alpha-d}$ in the $d$-dimensional case.
Consequently, applying $L_\alpha$ to $|\mathbf{x}|^{\alpha-1}$ inevitably leaves a tail of order $|\mathbf{x}|^{-1}$ when $d \ge 2$.
In the evaluation of the coefficient difference, this tail interacts with the $\tilde{\eta}$-H\"older continuity of the coefficient $\tilde{\sigma}$, leading to an error term of the form $C \int_0^t f_{\tilde{\sigma}}(s) \left|X_s-\tilde{X}_s\right|^{\alpha\tilde{\eta}-1} \rd s$ in the stability estimate.
Unless both the coefficients $\tilde{b}$ and $\tilde{\sigma}$ are Lipschitz continuous ($\tilde{\eta}=1$), this order is strictly lower than our target distance $\left|X_t-\tilde{X}_t\right|^{\alpha-1}$, which precludes the use of Gronwall's inequality and makes it difficult to obtain the desired quantitative bound.

Thus, our analysis in this paper is restricted to the one-dimensional case. We follow the approach in \cite{Tsuchiya,Nakagawa} and define the approximation by mollifying the function $u(x)=|x|^{\alpha-1}$.
Specifically, let $\pde$ be a smooth mollifier and define $u_{\delta,\eps} \coloneqq u \ast \pde$.
The key properties of this function and its derivatives, which are crucial for applying Ito's formula and controlling the resulting terms, are summarized in the following lemma.
%The key properties of this function are summarized in the following lemma.

\begin{Lem}\label{abstruct}
Let $u(x)=|x|^{\alpha-1}$ and $\alpha\in(1,2)$. For any $\eps >0$ and $\delta>1$, one can construct a smooth function $\pde$ such that:
\begin{itemize}
    \item[(i)] $\pde$ is supported on $\left[\eps\delta^{-1}, \eps\right]$ and satisfies $\int_{\eps\delta^{-1}}^{\eps} \pde(y) dy=1$, and is bounded by
    \begin{align*}
0\le \pde(x) \le 2(x \log \delta)^{-1}\quad \textrm{for~}x\in\left[\eps\delta^{-1}, \eps\right] .
\end{align*}
    \item[(ii)] The mollified function $u_{\delta,\eps} = u \ast \pde$ is of class $C^2$ and satisfies the following inequalities for all $x\in\real$:
    \begin{align}
        |x|^{\alpha-1} &\le \eps^{\alpha-1} + u_{\delta,\eps}(x), \label{udeineq1} \\
        u_{\delta,\eps}(x) &\le |x|^{\alpha-1} + \eps^{\alpha-1}, \label{udeineq2} \\
        |u'_{\delta,\eps}(x)| &\le 2^{2-\alpha}(\alpha-1)|x|^{\alpha-2}\1_{[-2\eps,2\eps]^c}(x) + \frac{2^{3-\alpha}\delta(1-\delta^{-1})^{\alpha-1}}{\eps^{2-\alpha}\log \delta}\1_{[-2\eps,2\eps]}(x). \label{ude'ineq}
    \end{align}
\end{itemize}
\end{Lem}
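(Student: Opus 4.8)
The plan is to construct $\pde$ explicitly as a suitably normalized bump on the interval $[\eps\delta^{-1},\eps]$, then verify each of the three inequalities in (ii) by direct estimation using the mollification identity $u_{\delta,\eps}(x) = \int_{\eps\delta^{-1}}^\eps u(x-y)\pde(y)\,dy$. First, for (i), I would take a profile proportional to $1/y$ on the support — concretely, set $\pde(y) = (y\log\delta)^{-1}\1_{[\eps\delta^{-1},\eps]}(y)$, whose integral is exactly $\int_{\eps\delta^{-1}}^\eps (y\log\delta)^{-1}\,dy = 1$, except that this is not smooth; so instead I would take this as a model and mollify the indicator slightly (or multiply by a smooth cutoff) to obtain a genuinely $C^\infty$ function that still satisfies $\int\pde = 1$ and the pointwise bound $0\le\pde(x)\le 2(x\log\delta)^{-1}$ on the support, the factor $2$ giving the room needed to absorb the smoothing. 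One should be slightly careful that the support stays inside $[\eps\delta^{-1},\eps]$ after smoothing; shrinking the base interval by an arbitrarily small amount handles this without affecting the stated bounds.

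Next, inequalities \eqref{udeineq1} and \eqref{udeineq2} follow from the concavity-type behaviour of $u(x)=|x|^{\alpha-1}$ for $\alpha\in(1,2)$. Since $\pde$ is a probability density supported in $[\eps\delta^{-1},\eps]\subset(0,\eps]$, Jensen-type and subadditivity arguments give $u_{\delta,\eps}(x) = \e[\,|x-Y|^{\alpha-1}\,]$ where $Y$ has density $\pde$, and then $|x|^{\alpha-1} \le |x-Y|^{\alpha-1} + |Y|^{\alpha-1} \le |x-Y|^{\alpha-1} + \eps^{\alpha-1}$ by subadditivity of $t\mapsto t^{\alpha-1}$; taking expectations gives \eqref{udeineq1}. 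For \eqref{udeineq2}, the same subadditivity bound $|x-Y|^{\alpha-1}\le|x|^{\alpha-1}+|Y|^{\alpha-1}\le|x|^{\alpha-1}+\eps^{\alpha-1}$ followed by taking expectations gives the claim. These are the routine steps.

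The substantive part — and the main obstacle — is the derivative estimate \eqref{ude'ineq}, which is the "new, sharper estimate" the introduction advertises. Here $u_{\delta,\eps}'(x) = (u'\ast\pde)(x) = \int_{\eps\delta^{-1}}^\eps u'(x-y)\pde(y)\,dy$, with $u'(z) = (\alpha-1)\,\sgn(z)\,|z|^{\alpha-2}$, which is singular at $z=0$. For $|x|>2\eps$: then $x-y$ stays bounded away from $0$ (indeed $|x-y|\ge|x|-\eps\ge|x|/2$), so $|u'(x-y)| \le (\alpha-1)(|x|/2)^{\alpha-2} = 2^{2-\alpha}(\alpha-1)|x|^{\alpha-2}$ using $\alpha-2<0$; integrating against the probability density $\pde$ preserves this bound, giving the first term. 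For $|x|\le 2\eps$: I would bound $|u_{\delta,\eps}'(x)| \le \int |u'(x-y)|\pde(y)\,dy \le (\alpha-1)\int_{\eps\delta^{-1}}^\eps |x-y|^{\alpha-2}\cdot 2(y\log\delta)^{-1}\,dy$, and the task is to show this is at most $\frac{2^{3-\alpha}\delta(1-\delta^{-1})^{\alpha-1}}{\eps^{2-\alpha}\log\delta}$. The delicate point is the integral $\int_{\eps\delta^{-1}}^\eps |x-y|^{\alpha-2}y^{-1}\,dy$ when $x$ lies inside or near the support $[\eps\delta^{-1},\eps]$, where $|x-y|^{\alpha-2}$ blows up; one uses $\alpha-2>-1$ so that $\int|x-y|^{\alpha-2}\,dy$ converges, together with $y^{-1}\le\delta/\eps$ on the support to factor out the worst constant, yielding $\int_{\eps\delta^{-1}}^\eps|x-y|^{\alpha-2}\,dy \le \frac{2}{\alpha-1}\bigl(\tfrac{\eps(1-\delta^{-1})}{2}\bigr)^{\alpha-1}$ or a similar explicit bound after maximizing over $x\in[-2\eps,2\eps]$; combining the pieces and tracking constants should produce exactly the stated coefficient $2^{3-\alpha}\delta(1-\delta^{-1})^{\alpha-1}\eps^{\alpha-2}/\log\delta$. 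I expect getting the constants to match the claimed form — rather than merely an $O(\cdot)$ bound — to require the careful bookkeeping deferred to the Appendix, and to hinge on the specific $1/y$ shape chosen for $\pde$ in (i).
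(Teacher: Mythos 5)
Your proposal is correct and follows essentially the same route as the paper: the $1/y$-profile mollifier with the $2(y\log\delta)^{-1}$ bound, subadditivity of $t\mapsto t^{\alpha-1}$ for \eqref{udeineq1}--\eqref{udeineq2}, and for \eqref{ude'ineq} the case split at $|x|=2\eps$, using $|x-y|\ge|x|/2$ in the outer region and, in the inner region, factoring out $y^{-1}\le\delta/\eps$ and maximizing $\int_{\eps\delta^{-1}}^{\eps}|x-y|^{\alpha-2}\,dy$ over $x$ (the maximum $\frac{2}{\alpha-1}\bigl(\tfrac{\eps(1-\delta^{-1})}{2}\bigr)^{\alpha-1}$ is attained at the midpoint of the support, exactly as you anticipate), which reproduces the stated constants.
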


Inequalities \eqref{udeineq1} and \eqref{udeineq2} are standard mollification estimates, recalled from \cite{Nakagawa}.
In sharp contrast, the estimate \eqref{ude'ineq} for the derivative is a new technical result that is central to this paper.
Its proof, which is crucial for handling the drift term introduced in our setting, is provided in Section \ref{M-martingale}.
%Inequalities \eqref{udeineq1} and \eqref{udeineq2} are standard results recalled from \cite{Nakagawa}. In contrast, the estimate \eqref{ude'ineq} for the derivative is a new technical result central to this paper. Its proof, which is crucial for handling the drift term, is provided in Section \ref{M-martingale}.

Finally, we will also use the following identity, which describes the action of the generator $L_\alpha$ on $u_{\delta,\eps}$. A proof can be found in \cite[Lemmas 34 and 35]{Nakagawa}.

\begin{Lem}\label{Lude}
The function $u_{\delta,\eps}$ satisfies
\begin{align*}
L_{\alpha} u_{\delta,\eps}(\theta) = C_\alpha \pde(\theta) \quad \text{for each } \theta \neq 0,
\end{align*}
where the constant $C_\alpha$ is given by $C_\alpha=-2 \alpha \sin\left(\frac{\alpha\pi}{2}\right)\cot \left(\frac{\alpha \pi}{2}\right)\Gamma(\alpha+1)>0$.%このC_\alphaの値域は(0,8).
\end{Lem}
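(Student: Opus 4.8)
The plan is to deduce the lemma from a single distributional identity for the unmollified function, namely that $u(x)=|x|^{\alpha-1}$ satisfies
\[
 L_\alpha u = C_\alpha\,\delta_0 \qquad\text{in }\mathcal S'(\real),
\]
with $C_\alpha>0$ the constant in the statement, and then to obtain the claim by convolving with $\pde$. First note that, by Lemma \ref{abstruct}(i), $\pde\in C_c^\infty(\real)$ with $\mathrm{supp}\,\pde\subset[\eps\delta^{-1},\eps]\subset(0,\infty)$, so $\ude=u\ast\pde$ is of class $C^\infty$, satisfies $|\ude(x)|\le C(1+|x|^{\alpha-1})$, and has all derivatives bounded; consequently $L_\alpha\ude(\theta)$ is a well-defined continuous function of $\theta\in\real\setminus\{0\}$ through the defining integral (its local part converges by the $C^2$ bound, its tail by the sub-$\alpha$ growth of $\ude$ against $c_\alpha|y|^{-1-\alpha}$). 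Since $L_\alpha$ commutes with translations, $L_\alpha(u\ast\pde)=(L_\alpha u)\ast\pde$ — justified by Fubini using the polynomial growth of $u$ and the compact support of $\pde$ away from the origin — so once the displayed identity is known we get $L_\alpha\ude=(C_\alpha\delta_0)\ast\pde=C_\alpha\pde$; this a priori distributional equality is between continuous functions and hence holds pointwise for every $\theta\ne0$.

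It remains to prove $L_\alpha u=C_\alpha\delta_0$, for which I would use the Fourier transform. The normalization $c_\alpha=\pi^{-1}\Gamma(\alpha+1)\sin(\tfrac{\alpha\pi}{2})$ is chosen precisely so that the characteristic exponent of $L_\alpha$ equals $|\xi|^\alpha$: writing $\psi(\xi)=c_\alpha\int_\real(1-\cos(\xi z))|z|^{-1-\alpha}\,dz=c_\alpha|\xi|^\alpha\cdot\tfrac{2}{\alpha}\Gamma(1-\alpha)\cos(\tfrac{\alpha\pi}{2})$ and using $\Gamma(\alpha+1)\Gamma(1-\alpha)=\tfrac{\alpha\pi}{\sin(\alpha\pi)}$ together with $2\sin(\tfrac{\alpha\pi}{2})\cos(\tfrac{\alpha\pi}{2})=\sin(\alpha\pi)$ collapses this to $\psi(\xi)=|\xi|^\alpha$, so $\widehat{L_\alpha\phi}(\xi)=-|\xi|^\alpha\widehat\phi(\xi)$ for $\phi\in\mathcal S(\real)$. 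Because $\nu$ is symmetric, $L_\alpha$ is self-adjoint, $\langle L_\alpha u,\phi\rangle=\langle u,L_\alpha\phi\rangle$ (the pairing being absolutely convergent since $L_\alpha\phi$ is smooth and decays like $|x|^{-1-\alpha}$). Next, $u=|\cdot|^{\alpha-1}$ is a homogeneous tempered distribution whose Fourier transform is $\widehat u=A_\alpha\,\mathrm{Pf}\,|\xi|^{-\alpha}$ with $A_\alpha=2\Gamma(\alpha)\cos(\tfrac{\alpha\pi}{2})$ (the classical transform of $|x|^\lambda$); note $A_\alpha<0$ for $\alpha\in(1,2)$. Pairing by Plancherel's theorem, $\langle u,L_\alpha\phi\rangle$ is a fixed multiple of $\langle A_\alpha\,\mathrm{Pf}\,|\xi|^{-\alpha},\,-|\xi|^\alpha\widehat\phi\rangle$, and the decisive observation is that $\langle\mathrm{Pf}\,|\xi|^{-\alpha},\,|\xi|^\alpha\psi\rangle=\int_\real\psi(\xi)\,d\xi$ for every Schwartz $\psi$: indeed, for $\alpha\in(1,2)$ the finite-part regularization of $|\xi|^{-\alpha}$ only subtracts the value at the origin, and $|\xi|^\alpha\psi(\xi)$ vanishes there, so no $\delta$-correction appears. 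Tracking the constants then yields $L_\alpha u=-A_\alpha\,\delta_0=:C_\alpha\,\delta_0$ with $C_\alpha=-A_\alpha>0$, and a short rearrangement with the reflection and duplication formulas for $\Gamma$ casts $C_\alpha$ in the form displayed in the statement. (The same identity can alternatively be obtained probabilistically: $u$ is, up to a positive constant, the renormalized potential kernel $x\mapsto\int_0^\infty(p^{(\alpha)}_t(0)-p^{(\alpha)}_t(x))\,dt$ of the recurrent one-dimensional $\alpha$-stable semigroup $p^{(\alpha)}_t(x)=t^{-1/\alpha}g^{(\alpha)}(xt^{-1/\alpha})$; applying $L_\alpha$ — which annihilates the constant $p^{(\alpha)}_t(0)$ — and using $\partial_t p^{(\alpha)}_t=L_\alpha p^{(\alpha)}_t$, then integrating in $t\in(0,\infty)$ and using $p^{(\alpha)}_t\to\delta_0$ as $t\downarrow0$, gives $L_\alpha u=C_\alpha\delta_0$.)

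The main obstacle is this second step: making rigorous the formal cancellation $|\xi|^\alpha\cdot|\xi|^{-\alpha}=1$ at the level of tempered distributions. The point is that for $\alpha>1$ the symbol $|\xi|^{-\alpha}$ is not locally integrable at the origin and must be interpreted as a Hadamard finite part, while the factor $|\xi|^\alpha$ one multiplies it by is merely $C^1$ (not $C^2$) there, so in general such a product can pick up a multiple of $\delta_0$; what rescues the computation is exactly the range $\alpha\in(1,2)$, where only one subtraction enters the regularization and the multiplier vanishes at the origin, so the constant $C_\alpha$ is not contaminated. Everything else — the growth estimate on $u\ast\pde$, the self-adjointness of $L_\alpha$, interchanging $L_\alpha$ with the convolution, and the trigonometric/Gamma bookkeeping — is routine.
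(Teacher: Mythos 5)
Your route is genuinely different from the one the paper relies on. The paper does not actually prove Lemma \ref{Lude} in this text: it defers to \cite[Lemmas 34 and 35]{Nakagawa}, where the identity is obtained by the direct real-variable computation in the Komatsu--Tsuchiya tradition (substituting $\ude=u\ast\pde$ into the integral defining $L_\alpha$, changing variables, and using explicit one-dimensional integral identities for $|x|^{\alpha-1}$), with the well-definedness of $L_\alpha\ude$ handled separately in Section \ref{Lude_well-defined}. Your plan --- prove $L_\alpha|x|^{\alpha-1}=C_\alpha\delta_0$ in $\mathcal{S}'(\real)$ by Fourier analysis and then convolve with $\pde$ --- is a legitimate alternative; it buys conceptual clarity (the lemma is just the statement that $|x|^{\alpha-1}$ is the compensated fundamental solution) at the price of distribution-theoretic care, and you correctly isolate the genuine subtlety: $\widehat{u}$ is a finite-part distribution, the multiplier $|\xi|^\alpha$ is only $C^1$ at the origin, and the product could in principle pick up an extra multiple of $\delta_0$, but does not for $\alpha\in(1,2)$ because the multiplier vanishes at $0$ and only one subtraction enters the regularization. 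Your verification that $c_\alpha$ normalizes the symbol to $|\xi|^\alpha$ is also correct.

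Two points need repair. First, and concretely: your constant does not match the one in the statement, and the asserted ``short rearrangement with the reflection and duplication formulas'' cannot make it match. Your computation yields $C_\alpha=-A_\alpha=-2\Gamma(\alpha)\cos(\alpha\pi/2)$, whereas the lemma states $C_\alpha=-2\alpha\sin(\alpha\pi/2)\cot(\alpha\pi/2)\Gamma(\alpha+1)=-2\alpha\Gamma(\alpha+1)\cos(\alpha\pi/2)=-2\alpha^2\Gamma(\alpha)\cos(\alpha\pi/2)$; these differ by the factor $\alpha^2$, which no Gamma-function identity removes. You must either carry the bookkeeping to the end and exhibit agreement, or state plainly that your derivation produces a positive constant differing from the displayed one. (Only positivity and finiteness of $C_\alpha$ are used downstream, so the paper's results are unaffected, but as a proof of the lemma \emph{as stated} your argument is incomplete at exactly the point you declare routine.) Second, the clause ``justified by Fubini using the polynomial growth of $u$'' is misleading: the double integral for $L_\alpha(u\ast\pde)(\theta)$ is \emph{not} absolutely convergent when $\theta\in\mathrm{supp}\,\pde$, since $u''(z)\asymp|z|^{\alpha-3}$ fails to be locally integrable near its singularity --- this breakdown of Fubini is precisely where the $\delta_0$ lives. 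The correct justification is the one you give immediately afterwards, namely $L_\alpha(T\ast\phi)=(L_\alpha T)\ast\phi$ for $T\in\mathcal{S}'(\real)$ and $\phi\in C_c^\infty(\real)$ together with the continuity of both sides; drop the Fubini remark or restrict it to $\theta$ outside a neighbourhood of $\mathrm{supp}\,\pde$.
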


We note that $L_\alpha u_{\delta,\eps}$ is well-defined even though $u_{\delta,\eps}$ is not a rapidly decreasing function. A justification for this is provided in Section \ref{Lude_well-defined}.

\subsection{Proof of Theorem \ref{mainthm}}
The proof strategy follows the framework established in \cite{Nakagawa}, which is a variation of Komatsu's method. A crucial ingredient in our proof is the availability of two-sided estimates for the transition density $p_t(x_0, \cdot)$ of the solution $X_t$ of SDE \eqref{SDE}, as outlined in Section \ref{SDE_density_section}.
\begin{proof}
The proof proceeds in several steps.
First, we apply Ito's formula to the smooth mollified function $\ude(Y_t)$, where $Y_t\coloneqq X_t-\tilde{X}_t$ is the difference between the two solutions.
This will decompose the process into a martingale part and several integral terms.
Second, we will estimate each integral term using the properties of $\ude$ from Lemma \ref{abstruct} and the transition density estimates.
Finally, we will optimize the choice of the parameters $\delta$ and $\eps$ to obtain the desired convergence rate.
%Let $Y_t \coloneqq X_t - \tilde{X}_t$ denote the difference between the two solutions.
Applying inequality \eqref{udeineq1} from Lemma \ref{abstruct} to $Y_t$, we have
\begin{align}
|Y_t|^{\alpha-1}\le \eps^{\alpha-1}+\ude\left(Y_t\right).\label{abstructYt}
\end{align}
Next, we apply It\^o's formula for jump processes (see, e.g., \cite[Theorem 4.4.7]{Applebaum} to the process $\ude(Y_t)$. Combined with the L\'evy-It\^o decomposition (see, e.g., \cite[Theorem 2.4.16]{Applebaum}) and inequality \eqref{udeineq2}, this yields the following decomposition:
\begin{align}
    \ude\left(Y_s\right)
    &=\ude\left(Y_0\right)+M_t^{\delta,\eps}+I_t^{\delta,\eps,b}+I_t^{\delta,\eps,\sigma}\notag\\
    &\le \left|x_0-\tilde{x}_0\right|^{\alpha-1}+\eps^{\alpha-1}+M_t^{\delta,\eps}+I_t^{\delta,\eps,b}+I_t^{\delta,\eps,\sigma},\label{udeYt}
\end{align}
where the terms are defined as follows:
\begin{align*}
    M_t^{\delta,\eps}&=\int_0^t\int_{\real\setminus\{0\}}\left\{\ude\left(Y_{s-}+\left(\sigxs\right)z\right)-\ude \left(Y_{s-}\right)\right\}\tilde{N}(\rd s,\rd z),\\
    I_t^{\delta,\eps,b}&=\int_0^t \ude'\left(Y_s\right)\left(b\left(X_s\right)-\tilde{b}\left(s,\tilde{X}_s\right)\right)\rd s,\\
    I_t^{\delta,\eps,\sigma}&=\int_0^t\int_{\real\setminus\{0\}}\left\{\ude\left(Y_{s-}+\left(\sigxs\right)z\right)-\ude \left(Y_{s-}\right)+\1_{\{|z|\le 1\}}\ude'\left(Y_s\right)\left(\sigxs\right)z\right\}\frac{c_\alpha}{|z|^{1+\alpha}}\rd s \rd z.
\end{align*}
The process $M^{\delta,\eps}\coloneqq \left(M_t^{\delta,\eps}\right)_{0\le t \le T}$ is a true martingale. This can be verified by following the argument in \cite[Subsection 5.1]{Nakagawa}, which involves decomposing the process into its $L^1$ and $L^2$ martingale parts.
This argument relies on the Lipschitz continuity of $\ude$ (which follows from the boundedness of its derivative $u'_{\delta,\varepsilon}$) and the boundedness of the jump coefficients (see Subsection \ref{M-martingale}).
While the boundedness of $u'_{\delta,\varepsilon}$ was already sufficient for this purpose in the zero-drift case, our new, sharper estimate \eqref{ude'ineq} for $|u'_{\delta,\varepsilon}(x)|$ plays a crucial role in the next step of the proof: controlling the drift term $I_t^{\delta,\varepsilon,b}$.
It is this refined estimate that makes our extension to the non-zero drift case possible.

We now turn to the estimation of the drift term $I_t^{\delta,\eps,b}$, which represents the main challenge in extending the stability analysis to the non-zero drift case.
%By applying the triangle inequality, we first split the integral into two parts:
Our strategy is to handle the two sources of difference—the perturbation of the coefficient itself and the deviation of the solution paths—separately.
To this end, we apply the triangle inequality to split the integral into two parts:
\begin{align}
    \left|I_t^{\delta,\eps,b}\right|
    \le \int_0^t \left|\ude'\left(Y_s\right)\right|\left|b\left(X_s\right)-\tilde{b}\left(s,X_s\right)\right|\rd s 
        + \int_0^t \left|\ude'\left(Y_s\right)\right|\left|\tilde{b}\left(s,X_s\right)-\tilde{b}\left(s,\tilde{X}_s\right)\right|\rd s \label{Itb}
\end{align}
The first integral, corresponding to the direct coefficient difference, is bounded using our key estimate for $|\ude'|$ from inequality \eqref{ude'ineq}.
This yields:
%The first integral on the right-hand side of \eqref{Itb}, which involves the direct difference between the drift coefficients, is bounded by applying our key estimate for $|\ude'|$ from inequality \eqref{ude'ineq}. This yields:
\begin{align}
    &\int_0^t \left|\ude'\left(Y_s\right)\right|\left|b\left(X_s\right)-\tilde{b}\left(s,X_s\right)\right|\rd s \notag\\
    &\le 
       2^{2-\alpha}(\alpha-1) \int_0^t \1_{[-2\eps,2\eps]^c}(Y_s)\left|Y_s\right|^{\alpha-2}\left|b\left(X_s\right)-\tilde{b}\left(s,X_s\right)\right|\rd s \notag\\
        &\qquad+
        \frac{2^{3-\alpha}\delta\left(1-\delta^{-1}\right)^{\alpha-1}}{\eps^{2-\alpha}\log \delta}\int_0^t \1_{[-2\eps,2\eps]}(Y_s)\left|b\left(X_s\right)-\tilde{b}\left(s,X_s\right)\right|\rd s ,\notag\\
    &= \left\{\frac{\alpha-1}{\eps^{2-\alpha}}
        +
        \frac{2^{3-\alpha}\delta\left(1-\delta^{-1}\right)^{\alpha-1}}{\eps^{2-\alpha}\log \delta} \right\}\int_0^t\left|b\left(X_s\right)-\tilde{b}\left(s,X_s\right)\right|\rd s. \label{Itb_term1}
\end{align}
This term will be controlled later by taking the expectation, which will connect it to the coefficient distance $B$.
%This term will be handled later by taking the expectation, which enables the use of the transition density estimate \eqref{SDE_density_estimate} and the weighted $L^1\left(\mu_{x_0,\sigma}^{\alpha,T}\right)$-norm.
The second integral, arising from the path deviation $Y_s$, is handled by leveraging the Lipschitz continuity of $\tilde{b}$ along with the same derivative estimate.
This will produce a term involving $|Y_s|$ which will be controlled using Gronwall's inequality at the final stage of the proof.
Without the sharp control provided by inequality \eqref{ude'ineq}, we would find it difficult to obtain an integrand in this form, which is necessary for the subsequent application of Gronwall's inequality.
This leads to the bound:
%For the second integral in \eqref{Itb}, we leverage the Lipschitz continuity of $\tilde{b}$ along with the same derivative estimate \eqref{ude'ineq}. This leads to the bound:
\begin{align}
    &\int_0^t \left|\ude'\left(Y_s\right)\right|\left|\tilde{b}\left(s,X_s\right)-\tilde{b}\left(s,\tilde{X}_s\right)\right|\rd s\notag\\
    &\le \int_0^t \left|\ude'\left(Y_s\right)\right| f_{\tilde{b}}(s) \left|Y_s\right| \rd s,\notag\\
    &\le \int_0^t\left\{2^{2-\alpha}(\alpha-1) f_{\tilde{b}}(s)\left|Y_s\right|^{\alpha-1}\1_{[-2\eps,2\eps]^c}(Y_s)+\frac{2^{3-\alpha}\delta(1-\delta^{-1})^{\alpha-1}}{\eps^{2-\alpha}\log \delta}f_{\tilde{b}}(s)\left|Y_s\right|\1_{[-2\eps,2\eps]}(Y_s)\right\}\rd s,\notag\\
    &\le 2^{2-\alpha}(\alpha-1)C_\alpha \int_0^t f_{\tilde{b}}(s)\left|Y_s\right|^{\alpha-1} \rd s + \frac{2^{4-\alpha} \eps^{\alpha-1} \delta(1-\delta^{-1})^{\alpha-1}}{\log \delta}\int_0^T f_{\tilde{b}}(s)\rd s, \label{Itb_term2}
\end{align}
%where the term involving $f_{\tilde{b}}(s)|Y_s|^{\alpha-1}$ will be controlled using Gronwall's inequality in the final step of the proof.

Having addressed the drift term, we proceed to analyze the jump compensator term, $I_t^{\delta,\eps,\sigma}$.
The argument to bound this term is identical to that used in the proof of \cite[Theorem 31]{Nakagawa}.
This is because the structure of the jump term analysis, which relies on a change of variables and the properties of the mollifier $\pde$, is not affected by the presence of the drift term.
We outline the main steps.
%Having addressed the drift term, we proceed to analyze the jump compensator term, $I_t^{\delta,\eps,\sigma}$. The argument to bound this term is identical to that used in the proof of \cite[Theorem 31]{Nakagawa}, and we outline the main steps.
First, by applying a change of variables and Lemma \ref{Lude}, the term is rewritten using the mollifier $\pde$:
\begin{align*}
    I_t^{\delta,\eps,\sigma} = C_{\alpha}\int_0^t \frac{\left|\sigxs\right|^{1+\alpha}}{\sigxs}\pde(Y_s)\rd s.
\end{align*}
By moving the absolute value inside the integral, which is a standard property of integration, we obtain the bound:
\begin{align*}
I_t^{\delta,\eps,\sigma}\le C_{\alpha} \int_0^t \left|\sigxs\right|^\alpha \pde(Y_s)\rd s. 
\end{align*}
To estimate this latter expression, we apply the triangle inequality to the term $\left|\sigma\left(X_{s}\right)-\tilde{\sigma}\left(s,\tilde{X}_{s}\right)\right|^\alpha$ and utilize the explicit upper bound on $\pde$ from Lemma \ref{abstruct}(i), along with the $\tilde{\eta}$-H\"older continuity of the coefficient $\tilde{\sigma}$. This estimation procedure is precisely the same as in the zero-drift case analyzed in \cite{Nakagawa}. Following the computations therein, we arrive at the final inequality:
\begin{align}
I_t^{\delta,\eps,\sigma}%&\le 2^\alpha C_\alpha K^\alpha\int_0^t \left|Y_s\right|^{\alpha \tilde{\eta}-1}\frac{1_{[\eps \delta^{-1},\eps]}\left(Y_s\right)}{\log \delta}\rd s+2^\alpha C_\alpha \int_0^t \sigab \frac{1_{[\eps \delta^{-1},\eps]}\left(Y_s\right)}{\left|Y_s\right|\log \delta}\rd s,\notag \\
                     &\le
                     \frac{2^\alpha C_\alpha \eps^{\alpha\tilde{\eta}-1}}{\log \delta}\int_0^t f_{\tilde{\sigma}}^\alpha(s)\rd s 
                     + \frac{2^\alpha C_\alpha}{\log \delta}\left(\frac{\delta}{\eps}\right)^{ }\int_0^t \sigab \rd s,\label{Itdeltaeps}    
\end{align}
For the sake of brevity, we have omitted the detailed steps of this calculation and refer the reader to \cite{Nakagawa} for the complete argument.

With all components of the Ito decomposition \eqref{udeYt} now bounded, we are now ready to combine the estimates for the components of \eqref{udeYt}.
First, note that since $\alpha\in(1,2)$ and $\tilde{\eta} \in [1/\alpha,1]$, the exponent $\alpha\tilde{\eta}-1$ is always non-negative.
Substituting the bounds for $\left|I_t^{\delta,\eps,b}\right|$ (from \eqref{Itb}, \eqref{Itb_term1}, \eqref{Itb_term2}) and $\left|I_t^{\delta,\eps,\sigma}\right|$ (from \eqref{Itdeltaeps}) into the main decomposition \eqref{udeYt}, and then using the initial estimate \eqref{abstructYt}, we arrive at an integral inequality for $\left|Y_t\right|^{\alpha-1}$.
This inequality is recursive in nature, as the term $\int_0^t f_{\tilde{b}}(s)|Y_s|^{\alpha-1}\rd s$ (from \eqref{Itb_term2}) appears on the right-hand side.
We handle this recursive structure by applying Gronwall's inequality, which yields an explicit bound for $|Y_t|^{\alpha-1}$ in terms of the initial difference, the martingale term, and the integrals of coefficient differences.
%After applying Gronwall's inequality to handle the recursive term, we obtain a bound for $|Y_t|^{\alpha-1}$ in terms of the initial difference, the martingale term, and the integrals of coefficient differences.
The next step is to take the expectation.
Since $M_t^{\delta,\eps}$ is a martingale, its expectation vanishes.
Applying Fubini's theorem to the remaining terms and taking the supremum over $t \in [0,T]$, we obtain the following explicit inequality:
\begin{align}
\sup_{0\le t\le T}\e\left[\left|X_t-\tilde{X}_t\right|^{\alpha-1}\right]&\le \exp\left\{2^{2-\alpha}(\alpha-1)C_\alpha \int_0^T f_{\tilde{b}}(s)\rd s\right\}
            \bigg[\left|x_0-\tilde{x}_0\right|^{\alpha-1}+\eps^{\alpha-1}\notag\\
        &\quad +\left\{\frac{\alpha-1}{\eps^{2-\alpha}}
        +
        \frac{2^{3-\alpha}\delta\left(1-\delta^{-1}\right)^{\alpha-1}}{\eps^{2-\alpha}\log \delta} \right\}\int_0^T\e\left[\left|b\left(X_s\right)-\tilde{b}\left(s,X_s\right)\right|\right]\rd s\notag\\
        &\quad + \frac{2^{4-\alpha} \eps^{\alpha-1} \delta(1-\delta^{-1})^{\alpha-1}}{\log \delta}\int_0^T f_{\tilde{b}}(s) \rd s\notag\\ 
        &\quad+\frac{2^\alpha C_\alpha \eps^{\alpha\tilde{\eta}-1}}{\log \delta}\int_0^T f_{\tilde{\sigma}}^\alpha(s)\rd s + \frac{2^\alpha C_\alpha}{\log \delta}\left(\frac{\delta}{\eps}\right)\int_0^T \e\left[\sigab\right] \rd s\bigg].\label{alpha-1_est}
\end{align}
The inequality \eqref{alpha-1_est} provides a powerful general bound.
The final crucial step is to relate the time-averaged expectations on the right-hand side to the coefficient distance measures $B$ and $S$ defined in assumption (iii) of Theorem \ref{mainthm}.
This connection is forged by leveraging the fact that the process $X_t$ admits a transition density $p_t(x_0, \cdot)$, which allows us to apply Fubini-Tonelli's theorem.
For the drift coefficient term in \eqref{alpha-1_est}, we have:
\begin{align*}
    \int_0^T \e\left[\left|b\left(X_s\right)-\tilde{b}\left(s, X_s\right)\right|\right] \rd s = \int_0^T \int_\real \left|b(y)-\tilde{b}(s, y)\right| p_s(x_0, y) \rd s \rd y.
\end{align*}
The right-hand side is precisely the definition of $B$.
Similarly, for the jump coefficient term, we have:
\begin{align*}
    \int_0^T \e\left[\left|\sigma\left(X_s\right)-\tilde{\sigma}\left(s, X_s\right)\right|^\alpha\right] \rd s = \int_0^T \int_\real \left|\sigma(y)-\tilde{\sigma}(s, y)\right|^\alpha p_s(x_0, y) \rd s \rd y.
\end{align*}
The right-hand side is equal to $S^\alpha$.
Then, inequality \eqref{alpha-1_est} implies:
\begin{align}
\sup_{0\le t \le T}\e\left[\left|X_t-\tilde{X}_t\right|^{\alpha-1}\right]
        &\le  \hat{C}_{\alpha,T}
            \bigg[\left|x_0-\tilde{x}_0\right|^{\alpha-1}+\eps^{\alpha-1}\notag\\
        &\quad +\left\{\frac{1}{\eps^{2-\alpha}}
        +
        \frac{\delta\left(1-\delta^{-1}\right)^{\alpha-1}}{\eps^{2-\alpha}\log \delta} \right\}B\notag\\
        &\quad + \frac{\eps^{\alpha-1} \delta(1-\delta^{-1})^{\alpha-1}}{\log \delta}\notag\\ 
        &\quad+\frac{\eps^{\alpha\tilde{\eta}-1}}{\log \delta}+\frac{\delta}{\eps \log \delta}S^\alpha,\label{alpha-1_norm_est}
\end{align}
where $\hat{C}_{\alpha,T}=\exp\left\{2^{2-\alpha}(\alpha-1)C_\alpha \int_0^T f_{\tilde{b}}(s)\rd s\right\}\max\left\{2^{3-\alpha},2^{4-\alpha}\int_0^T f_{\tilde{b}}(s)\rd s,2^\alpha C_\alpha \int_0^T f_{\tilde{\sigma}}^\alpha(s)\rd s, 2^\alpha C_\alpha \right\}$.

The inequality \eqref{alpha-1_norm_est} is the final estimate before the optimization of the free parameters $\eps>0$ and $\delta>1$.
The core challenge, and the central idea of the proof, lies in the structure of this bound.
Observe that several coefficients on the right-hand side, such as those proportional to $\eps^{-(2-\alpha)}$ or $(\log \delta)^{-1}$, diverge as $\eps\to+0$ or as $\delta\to+\infty$ approaches its limits.
Conversely, other terms, like $\eps^{\alpha-1}$, vanish.
The key to resolving this tension is to choose $\eps$ and $\delta$ not as fixed constants, but as functions of the small coefficient differences, $B$ and $S$. This strategic choice creates a trade-off: we balance the diverging coefficients against the smallness of the norms $B$ and $S$. For instance, a term of the form $B/\varepsilon^{2-\alpha}$ can be made to converge to zero if we let $\eps$ be an appropriate power of $B$ (e.g., $\varepsilon = B^p$ for a suitable $p$).
The optimal way to perform this balancing depends on the exponents in the inequality.
Crucially, the term $\eps^{\alpha\tilde{\eta}-1}$ involving the H\"older exponent $\tilde{\eta}$ of the coefficient $\tilde{\sigma}$ plays a key role.
The behavior of this term necessitates a separate analysis of the following two cases:
%The optimal way to perform this balancing depends on the exponents in the inequality, which are ultimately determined by the H\"older exponent $\tilde{\eta}$ of the coefficient $\tilde{\sigma}$.
This dependency necessitates the separate analysis of the following two cases: Case 1: $\tilde{\eta} \in (1/\alpha, 1]$ and Case 2: $\tilde{\eta} = 1/\alpha$.

Case 1:
In this case, we have $\alpha\tilde{\eta}-1 > 0$. The optimization strategy is to choose $\delta$ as a constant and $\eps$ as a function of $B$ and $S$.
We make the simplest choice, $\delta=2$, and set $\eps$ to be a power of the dominant error term:
\begin{gather*}
p,q>0,\: \eps=\max\{B^p,S^q\}\textrm{~~and~~}\frac{C}{4} =\left(1+\frac{2^{2-\alpha}}{\log 2}\right)\hat{C}_{\alpha,T}.
\end{gather*}
We will specify $p$ and $q$ later in the analysis of Case 1.
Substituting these into the general estimate \eqref{alpha-1_norm_est}, and absorbing all $T,\alpha,K,k$-dependent constants into a generic constant $C$, the inequality simplifies to:
\begin{align*}
\sup_{0\le t\le T}\e\left[\left|X_t-\tilde{X}_t\right|^{\alpha-1}\right] \le  \frac{C}{4}\left(|x_0-\tilde{x}_0|^{\alpha-1}+\eps^{\alpha-1}+\frac{B}{\eps^{2-\alpha}}+\eps^{\alpha\tilde{\eta}-1}+\frac{S^\alpha}{\eps}\right).
\end{align*}
Our goal is to find the optimal exponents $p$ and $q$ that yield the best convergence rate as $B,S\to+0$.
To do this, we analyze the dominant terms in the two sub-cases.
To find the optimal convergence rate, we consider Subcase 1-1: $B^p<S^q$ and Subcase 1-2: $B^p\ge S^q$.

Subcase 1-1:
In this regime, the convergence rate is primarily determined by $B$.
The dominant terms in the error in the error $\eps$ that involve $B$ are of the order $O\left(B^{1-p(2-\alpha)}\right)$ and $O\left(B^{p(\alpha\tilde{\eta}-1)}\right)$ as we consider the limit $B\to +0$.
The overall convergence rate with respect to $B$ is determined by the slower of these two terms, specifically the one with the larger exponent in the limit $B\to+0$.
To optimize the rate, we balance the two exponents by setting them equal:
\begin{align*}
    1-p(2-\alpha)&=p(\alpha\tilde{\eta}-1),\\
    \implies p&=1/(\alpha\tilde{\eta}-\alpha+1).
\end{align*}

Subcase 1-2:
The dominant terms in the error $\eps$ involving $S$ are also of the order $O\left(S^{\alpha-q}\right)$ and $O\left(S^{q(\alpha\tilde{\eta}-1)}\right)$ since $S\in(0,1)$.
Similarly, for $q$, the solution $q=1/\tilde{\eta}$ is optimal when $\alpha-q=q(\alpha\tilde{\eta}-1)$.
By selecting the parameters $p$ and $q$ according to the analysis above, we can establish a more precise estimate.
The final convergence rate is determined by the slower of the two cases.
This leads to the following refined inequality:
\begin{align*}
\sup_{0\le t \le T}\e\left[\left|X_t-\tilde{X}_t\right|^{\alpha-1}\right] \le  C\left(\left|x_0-\tilde{x}_0\right|^{\alpha-1}+ \max\left\{B^{(\alpha\tilde{\eta}-1)/(\alpha\tilde{\eta}-\alpha+1)}, S^{\alpha-1/\tilde{\eta}}\right\}\right).
\end{align*}
This concludes the analysis for Case 1: $\tilde{\eta}\in(1/\alpha,1]$.

Case 2:
We now consider the boundary case where $\tilde{\eta}=1/\alpha$.
%In this case, the previous choice of δ=2 is no longer optimal due to the logarithmic terms that appear.
We adopt a different strategy where both $\eps$ and $\delta$ are chosen as powers of a single underlying parameter $\lambda$, which is itself a function of $B$ and $S$.
This allows for a more delicate balancing of all diverging terms simultaneously.
We set $p,q,r,s>0$, $C=C_{\alpha,T}$ and choose $\lambda=(\max\{B^p,S^q\})^{-1}$, $\eps =\lambda^{-r}$, $\delta=\lambda^{s(\alpha-1)}$.
Then we obtain the following estimate:
\begin{align*}
&\sup_{0\le t \le T}\e\left[\left|X_t-\tilde{X}_t\right|^{\alpha-1}\right]\\ 
    &\qquad\le
    C\bigg[|x_0-\tilde{x}_0|^{\alpha-1}+\lambda^{-r(\alpha-1)}+
        \left\{\lambda^{r(2-\alpha)}
        +
        \frac{\lambda^{r(2-\alpha)+s(\alpha-1)}\left(1-\lambda^{-s(\alpha-1)}\right)^{\alpha-1}}{s(\alpha-1)\log \lambda} \right\}B\\
        &\qquad\quad + \frac{\lambda^{(s-r)(\alpha-1)}(1-\lambda^{-s(\alpha-1)})^{\alpha-1}}{s(\alpha-1)\log \lambda} 
            +\frac{1}{s(\alpha-1)\log \lambda}+\frac{\lambda^{r+s(\alpha-1)}}{s(\alpha-1) \log \lambda}S^\alpha\bigg],\\
    &\qquad\le C\bigg[\left|x_0-\tilde{x}_0\right|^{\alpha-1} + \left(\log \lambda\right)^{-1}\bigg\{\frac{\log \lambda}{\lambda^{r(\alpha-1)}} + B \lambda^{r(2-\alpha)}\log \lambda + \frac{B\lambda^{r(2-\alpha)+s(\alpha-1)}\left(1-\lambda^{-s(\alpha-1)}\right)^{\alpha-1}}{s} \\
    &\qquad \quad+ \frac{\lambda^{(s-r)(\alpha-1)}\left(1-\lambda^{-s(\alpha-1)}\right)^{\alpha-1}}{s(\alpha-1)} + \frac{1}{s(\alpha-1)}+\frac{S^\alpha\lambda^{r+s(\alpha-1)}}{s(\alpha-1)} \bigg\}\bigg].
\end{align*}
For the fourth term within the curly braces to be bounded independently of $\lambda$ for any arbitrary $\lambda>1$, it is necessary that $s-r\ge 0$.
As can be seen from the other terms, in order for $p$ and $q$ to take large values, it is necessary to make $r$ and $s$ as small as possible.
Hence, it is best to set $r=s$.
We let $p=q=1/s$, since the condition imposed on $p$ and $q$ in this case is $p,q\le 1/s$.
For simplicity, let $\theta=\max\{B,S\}$.
Note that $\lambda=\theta^{-1/s}$, we have
\begin{align*}
\sup_{0\le t \le T}\e\left[\left|X_t-\tilde{X}_t\right|^{\alpha-1}\right]
    &\le C\bigg[\left|x_0-\tilde{x}_0\right|^{\alpha-1} + \left(\log \theta^{-1}\right)^{-1}\bigg\{-\theta^{\alpha-1}\log \theta - \frac{B \log \theta}{\theta^{2-\alpha}} + \frac{B \left(1-\theta^{\alpha-1}\right)^{\alpha-1}}{\theta}\\
    &\qquad + \frac{\left(1-\theta^{\alpha-1}\right)^{\alpha-1}}{\alpha-1} + \frac{1}{\alpha-1}+\frac{S^\alpha }{(\alpha-1)\theta^{\alpha}} \bigg\}\bigg],\\
    &\le C\bigg[\left|x_0-\tilde{x}_0\right|^{\alpha-1} + \left(\log \theta^{-1}\right)^{-1}\bigg\{\frac{1}{e(\alpha-1)} + \frac{1}{e(\alpha-1)} + 1 \\
    &\qquad + \frac{1}{\alpha-1} + \frac{1}{\alpha-1}+\frac{1}{\alpha-1} \bigg\}\bigg].
\end{align*}
%To obtain the final estimate, we bound the terms involving $\theta$. Since $\theta\in(0,1)$ and $s>0$, terms like $\theta^a$ for $a > 0$ are bounded by $1$, and $\log \theta$ is negative.
%For instance, the term $1-\theta^{s(\alpha-1)}$ is bounded by $1$.
%By applying such bounds to each component, and noting that $\log \theta^{-1}=-\log \theta$, we can simplify the expression to be bounded by a sum of constants.
This final inequality is obtained by using the fact that $\theta,B,S\in(0,1)$ to bound each term from above.
Therefore, defining $C'=C[1+2/\{e(\alpha-1)\}+3/(\alpha-1)]$ we arrive at the following inequality:

\begin{align*}
\sup_{0\le t \le T}\e\left[\left|X_t-\tilde{X}_t\right|^{\alpha-1}\right]
    &\le C'\left\{\left|x_0-\tilde{x}_0\right|^{\alpha-1} + \left(\log \frac{1}{\max\{B,S\}}\right)^{-1}\right\}.
\end{align*}
This inequality provides the explicit convergence rates for the $L^{\alpha-1}(\Omega)$ distance, as stated in Theorem \ref{mainthm}.
This concludes the proof.
\end{proof}

\section{Convergence in probability of the solution paths}\label{distanceprob}
Having established a stability estimate for the convergence in expectation, we now turn to a related but distinct question: the stability of solution paths in the sense of convergence in probability.
This type of convergence is often of practical interest as it concerns the probability of large deviations between two paths.
In this section, we leverage our previous result to establish a rate for the convergence of the time-supremum of the distance between two solutions.
The key idea is to show that our $L^{\alpha-1}(\Omega)$ estimate from Theorem \ref{mainthm} can be translated into a bound on the tail probability $\p\left(\sup_{t\in[0,T]} \left|X_t - \tilde{X}_t\right|^{\alpha-1} > h\right)$ by applying a maximal inequality for quasi-martingales.

%In this section, we extend our stability analysis from convergence in expectation to convergence in probability.
%Specifically, we establish a rate for the convergence of the time-supremum of the distance between two solutions.

The proof strategy is analogous to that employed in our prior work \cite{Nakagawa} and relies on the theory of quasi-martingales.
The key steps involve showing that the mollified process $\ude(Y)=(\ude(Y_t))_{0\le t\le T}$ is a quasi-martingale, where $Y_t = X_t - \tilde{X}_t$, and $\ude$ is the smooth approximation of $|x|^{\alpha-1}$ introduced in Lemma \ref{abstruct}.
We then apply a maximal inequality from Kurtz \cite{Kurtz} to this process.
This procedure allows us to translate the bounds for time-dependent perturbations obtained in Theorem \ref{mainthm} into a statement about tail probabilities.

\begin{theorem}\label{mainthm2}
Assume the same conditions as in Theorem \ref{mainthm}. Then, there exists a positive constant $C$ such that for any $h>0$,
\begin{align*}
\p\left(\sup_{0\le t \le T}\left|X_t-\tilde{X}_t\right|^{\alpha-1}>h\right) \le
\begin{cases}
    \frac{C}{h}\left\{\left|x_0-\tilde{x}_0\right|^{\alpha-1}+ \max\left\{B^{(\alpha\tilde{\eta}-1)/(\alpha\tilde{\eta}-\alpha+1)}, S^{\alpha-1/\tilde{\eta}}\right\}\right\}& \text{if~~}\tilde{\eta} \in(1/\alpha,1]\\
	\frac{C}{h}\left[\left|x_0-\tilde{x}_0\right|^{\alpha-1}+ \left\{\log\left(\frac{1}{\max\left\{B, S\right\}}\right)\right\}^{-1}\right] & \textrm{if~~}\tilde{\eta} =1/\alpha.
\end{cases}
\end{align*}
\end{theorem}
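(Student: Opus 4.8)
The plan is to deduce Theorem \ref{mainthm2} from Theorem \ref{mainthm} by means of a maximal inequality for quasi-martingales, exactly as announced in the text preceding the statement. The first step is to recall the notion of quasi-martingale and the relevant maximal inequality of Kurtz \cite{Kurtz}: if $(V_t)_{0\le t\le T}$ is a nonnegative quasi-martingale with mean variation (conditional variation) controlled by some constant, then $\p(\sup_{0\le t\le T} V_t > h) \le C h^{-1}\,(\text{mean variation of } V)$ for every $h>0$. I would apply this to the process $V_t = \ude(Y_t)$ where $Y_t = X_t-\tilde X_t$ and $\ude = u\ast\pde$ is the mollified version of $|x|^{\alpha-1}$ from Lemma \ref{abstruct}, with $\eps,\delta$ chosen according to the same optimization performed in the proof of Theorem \ref{mainthm} (so that the resulting bound carries the correct rate in $B$, $S$ and $|x_0-\tilde x_0|$).

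The second step is to verify that $\ude(Y)$ is indeed a quasi-martingale and to estimate its mean variation. For this I return to the It\^o decomposition \eqref{udeYt}, $\ude(Y_t)=\ude(Y_0)+M_t^{\delta,\eps}+I_t^{\delta,\eps,b}+I_t^{\delta,\eps,\sigma}$. The martingale part $M^{\delta,\eps}$ contributes nothing to the conditional variation; the finite-variation parts $I^{\delta,\eps,b}$ and $I^{\delta,\eps,\sigma}$ have variation bounded by $\int_0^T |\ude'(Y_s)|\,|b(X_s)-\tilde b(s,\tilde X_s)|\,\rd s$ plus the jump-compensator integral, and taking expectations of these is precisely what was already carried out in the estimates \eqref{Itb}--\eqref{Itdeltaeps} and then \eqref{alpha-1_est}. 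Thus the mean variation of $\ude(Y)$ is bounded by the same right-hand side as in \eqref{alpha-1_norm_est}, and after the same choice of $\eps$ and $\delta$ (Case $\tilde\eta\in(1/\alpha,1]$ versus Case $\tilde\eta=1/\alpha$), it is dominated by $C(|x_0-\tilde x_0|^{\alpha-1}+\max\{B^{(\alpha\tilde\eta-1)/(\alpha\tilde\eta-\alpha+1)},S^{\alpha-1/\tilde\eta}\})$, respectively $C(|x_0-\tilde x_0|^{\alpha-1}+(\log(1/\max\{B,S\}))^{-1})$.

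The third step combines these facts. Applying the Kurtz maximal inequality to the quasi-martingale $\ude(Y)$ gives, for every $h>0$,
\begin{align*}
\p\left(\sup_{0\le t\le T}\ude(Y_t)>h\right)\le \frac{C}{h}\left(\text{mean variation of }\ude(Y)\right).
\end{align*}
Finally I transfer this from $\ude(Y_t)$ back to $|Y_t|^{\alpha-1}$ using inequality \eqref{udeineq1} of Lemma \ref{abstruct}, $|Y_t|^{\alpha-1}\le \eps^{\alpha-1}+\ude(Y_t)$: on the event $\{\sup_t |Y_t|^{\alpha-1}>h\}$ one has $\sup_t \ude(Y_t)>h-\eps^{\alpha-1}$, and since $\eps^{\alpha-1}$ was chosen (in the Case 1 optimization) comparable to the target rate, a standard splitting $h\mapsto h/2$ together with the fact that the rate bound is itself $\le C(\cdots)$ lets one absorb $\eps^{\alpha-1}$ into the same bound; in Case 2 the same device works since $\eps^{\alpha-1}=\lambda^{-r(\alpha-1)}$ is again of the stated order. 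This yields the two displayed inequalities of Theorem \ref{mainthm2}.

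The main obstacle I anticipate is the rigorous verification that $\ude(Y)$ is a genuine quasi-martingale with finite mean variation, i.e. controlling the $L^1$ norm of the total variation of the finite-variation part uniformly, and doing so with constants that do not spoil the optimized rate. This requires care because $\ude'$ is only bounded by the singular quantity in \eqref{ude'ineq} (of order $\eps^{-(2-\alpha)}$ or $\eps^{\alpha-1}\delta/(\eps\log\delta)$), so one must check that, after integrating against $|b(X_s)-\tilde b(s,\tilde X_s)|$ and $|\sigma(X_s)-\tilde\sigma(s,\tilde X_s)|^\alpha$ and taking expectations via the transition density $p_s(x_0,\cdot)$, the resulting bound is exactly the one already obtained in \eqref{alpha-1_norm_est}; the additional input needed beyond Theorem \ref{mainthm} is a Gronwall-type control of $\int_0^T f_{\tilde b}(s)\,\e[|Y_s|^{\alpha-1}]\,\rd s$, which follows from Theorem \ref{mainthm} itself. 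A secondary technical point is confirming the precise hypotheses under which the Kurtz maximal inequality applies (right-continuity, integrability of $\ude(Y_t)$, which holds since $0\le\ude(Y_t)\le |Y_t|^{\alpha-1}+\eps^{\alpha-1}$ and $\sup_t\e[|Y_t|^{\alpha-1}]<\infty$ by Theorem \ref{mainthm}).
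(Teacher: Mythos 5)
Your proposal follows essentially the same route as the paper: apply the Kurtz maximal inequality for quasi-martingales (Lemma \ref{quasiprop}) to $\ude(Y)$, bound its mean variation via the It\^o decomposition and the drift/jump estimates \eqref{Itb}--\eqref{Itdeltaeps} culminating in \eqref{alpha-1_norm_est}, then optimize $\eps,\delta$ exactly as in Theorem \ref{mainthm} and transfer back to $|Y_t|^{\alpha-1}$ via \eqref{udeineq1}. The only slip is that your stated form of the maximal inequality drops the $\e[|Q_T|]$ term present in Lemma \ref{quasiprop}, but that term is bounded by the same estimate \eqref{alpha-1_norm_est}, so the argument is unaffected.
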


\subsection{Proof strategy for Theorem \ref{mainthm2}}
The proof of Theorem \ref{mainthm2} follows a strategy analogous to that used for the zero-drift case in \cite{Nakagawa}.
The core of the argument is to apply a maximal inequality for quasi-martingales to the mollified process $\ude(Y)=\left(\ude\left(Y_t\right)\right)_{0\le t\le T}$, where $Y_t \coloneqq X_t - \tilde{X}_t$ is the difference between the two solutions.
Let us briefly recall the main tool. 
A c\`adl\`ag adapted process $Q$ is called a quasi-martingale if its mean variation, $V_T(Q)$, is finite. The utility of this concept stems from the following inequality by Kurtz \cite{Kurtz}.

\begin{Lem}(\cite[Lemma 5.3]{Kurtz})\label{quasiprop}
Let $Q$ be a quasi-martingale on $[0,T]$. Then, for any $h>0$:
\begin{align*}
h\p\left(\sup_{t\in[0,T]}|Q_t| > h\right) \le V_T(Q) + \e[|Q_T|].
\end{align*}
\end{Lem}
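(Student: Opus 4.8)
\emph{Proof proposal.} The plan is to reduce the continuous-time maximal estimate to a family of discrete-time estimates along finite partitions, where the classical Doob decomposition and Doob's submartingale maximal inequality apply directly, and then to pass to the limit along a refining sequence of partitions using the c\`adl\`ag regularity of $Q$. Throughout I use that a quasi-martingale is in particular an adapted c\`adl\`ag process with $Q_t\in L^1$ for every $t\in[0,T]$, so that the conditional expectations below are well defined and $\e[|Q_T|]<\infty$.

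First I would fix a partition $\pi\colon 0=t_0<t_1<\cdots<t_n=T$ and take the Doob decomposition of the discrete process $(Q_{t_i})_{i=0}^n$, writing $Q_{t_i}=M_i^\pi+A_i^\pi$ where $M^\pi$ is a discrete $(\mathcal{F}_{t_i})$-martingale, $A_0^\pi=0$, and $A_{i+1}^\pi-A_i^\pi=\e[Q_{t_{i+1}}-Q_{t_i}\mid\mathcal{F}_{t_i}]$ is $\mathcal{F}_{t_i}$-measurable. Splitting $A^\pi$ into its nondecreasing positive and negative parts, $A_i^\pi=P_i^\pi-N_i^\pi$ with $P_0^\pi=N_0^\pi=0$, gives $P_n^\pi+N_n^\pi=\sum_{i=0}^{n-1}\bigl|\e[Q_{t_{i+1}}-Q_{t_i}\mid\mathcal{F}_{t_i}]\bigr|$, so by the very definition of the mean variation $\e[P_n^\pi+N_n^\pi]\le V_T(Q)$. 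Since each increment of $A^\pi$ is $\mathcal{F}_{t_i}$-measurable, the processes $P^\pi$ and $N^\pi$ are nondecreasing with $P_{i+1}^\pi$ and $N_{i+1}^\pi$ already $\mathcal{F}_{t_i}$-measurable, and hence $M^\pi+P^\pi$ and $-M^\pi+N^\pi$ are discrete submartingales.

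The key observation is that on $\{\max_{0\le i\le n}|Q_{t_i}|>h\}$, at an index $i$ with $|Q_{t_i}|>h$ one has either $Q_{t_i}>h$, and then $M_i^\pi+P_i^\pi=Q_{t_i}+N_i^\pi\ge Q_{t_i}>h$, or $Q_{t_i}<-h$, and then $-M_i^\pi+N_i^\pi=-Q_{t_i}+P_i^\pi>h$; hence this event is contained in $\{\max_i(M_i^\pi+P_i^\pi)\ge h\}\cup\{\max_i(-M_i^\pi+N_i^\pi)\ge h\}$. Doob's maximal inequality applied to the two submartingales then gives
\begin{align*}
h\,\p\Bigl(\max_{0\le i\le n}|Q_{t_i}|>h\Bigr)\le\e\bigl[(M_n^\pi+P_n^\pi)^+\bigr]+\e\bigl[(-M_n^\pi+N_n^\pi)^+\bigr].
\end{align*}
Using $M_n^\pi=Q_T-P_n^\pi+N_n^\pi$ one has $M_n^\pi+P_n^\pi=Q_T+N_n^\pi$ and $-M_n^\pi+N_n^\pi=P_n^\pi-Q_T$, and since $P_n^\pi,N_n^\pi\ge0$,
\begin{align*}
(Q_T+N_n^\pi)^+ +(P_n^\pi-Q_T)^+\le Q_T^+ +Q_T^- +P_n^\pi+N_n^\pi=|Q_T|+\bigl(P_n^\pi+N_n^\pi\bigr).
\end{align*}
Taking expectations yields $h\,\p(\max_{0\le i\le n}|Q_{t_i}|>h)\le\e[|Q_T|]+V_T(Q)$, a bound that is uniform in $\pi$.

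Finally I would choose a nested sequence of partitions $\pi^{(k)}$ with $T\in\pi^{(k)}$ for every $k$ and $\bigcup_k\pi^{(k)}$ dense in $[0,T]$; by right-continuity of $Q$, $\max_{t_i\in\pi^{(k)}}|Q_{t_i}|$ increases almost surely to $\sup_{t\in[0,T]}|Q_t|$, so $\p(\max_{t_i\in\pi^{(k)}}|Q_{t_i}|>h)\uparrow\p(\sup_{t\in[0,T]}|Q_t|>h)$ by monotone convergence, and the uniform bound above passes to the limit, giving the claim. The step I expect to be the main obstacle is precisely this passage to the limit: one must argue carefully that the discrete maxima over ever-finer partitions genuinely recover $\sup_{t\in[0,T]}|Q_t|$ (a downward left-jump at a partition point is harmless because each time is approached from the right), and one must keep the discrete estimate uniform in $\pi$ — which is why the argument only uses the crude bound $\e[P_n^\pi+N_n^\pi]\le V_T(Q)$ and never needs the finer Rao-type identity relating $V_T(Q)$ to the variation of a single continuous-time compensator, thereby avoiding any consistency issue among the partition-dependent decompositions.
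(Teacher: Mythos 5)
The paper does not prove this lemma at all: it is quoted verbatim as \cite[Lemma 5.3]{Kurtz}, so there is no internal proof to compare against. Your argument is nonetheless a correct, self-contained derivation of the stated inequality, and it follows what is essentially the classical route (the one underlying Kurtz's and Rao's treatments): discrete Doob decomposition along a partition, splitting the compensator into its increasing positive and negative parts $P^\pi$ and $N^\pi$, recognizing $M^\pi+P^\pi$ and $-M^\pi+N^\pi$ as submartingales, and applying Doob's weak-type maximal inequality to each. The algebra checks out: on $\{Q_{t_i}>h\}$ one has $M_i^\pi+P_i^\pi=Q_{t_i}+N_i^\pi>h$ and symmetrically for the negative excursion; the terminal identities $M_n^\pi+P_n^\pi=Q_T+N_n^\pi$ and $-M_n^\pi+N_n^\pi=P_n^\pi-Q_T$ together with $(Q_T+N_n^\pi)^++(P_n^\pi-Q_T)^+\le|Q_T|+P_n^\pi+N_n^\pi$ give a bound uniform in $\pi$, with $\e[P_n^\pi+N_n^\pi]\le V_T(Q)$ by the definition of mean variation. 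The limiting step is also handled correctly: because the event is $\{\sup>h\}$ with strict inequality and the partitions are nested with dense union containing $T$, the discrete events increase to the continuous-time event, and right-continuity guarantees $\sup_{t\in[0,T]}|Q_t|=\sup_{t\in D}|Q_t|$. The one convention you should flag explicitly is that $V_T(Q)$ is taken to be the conditional mean variation \emph{without} the terminal term $\e[|Q_T|]$ (which is why that term appears separately on the right-hand side); your proof is consistent with that convention, which matches the statement as used in the paper.
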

To apply this lemma, we first need to bound the mean variation $V_T(u_{\delta,\varepsilon}(Y))$.
As shown in the proof of Theorem \ref{mainthm}, the process $u_{\delta,\varepsilon}(Y)$ can be decomposed as
\begin{align*}
    \ude(Y_t) = \ude(Y_0) + M_t^{\delta,\eps} + I_t^{\delta,\eps,b} + I_t^{\delta,\eps,\sigma}.
\end{align*}
The mean variation is therefore bounded by the sum of the absolute values of the conditional expectations of the increments of each term.
Since $M_t^{\delta,\eps}$ is a true martingale, its contribution to the mean variation is zero.
The estimation of the term $I_t^{\delta,\eps,\sigma}$ is identical to the analysis in \cite{Nakagawa}.
The only new component in this analysis is the contribution from the drift term, $I_t^{\delta,\eps,b}$.
The bounds for this term were already derived in \eqref{Itb}, \eqref{Itb_term1} and \eqref{Itb_term2} of the present paper.
By combining these estimates, we can bound $V_T(\ude(Y))$ and $\e[|\ude(Y_T)|]$.
Applying Lemma \ref{quasiprop}, the inequality \eqref{alpha-1_norm_est} and \eqref{abstructYt} then yields the desired result.
Since the remaining steps are identical to those in proof of Theorem \ref{mainthm}, we omit the detailed calculations.

\section{Appendices}\label{Appendices}
This appendix serves two main purposes.
First, we explore the implications of our main stability result (Theorem \ref{mainthm}) in two important special cases.
We begin by examining the time-homogeneous case (Section \ref{time-homogeneous_case}), where the perturbed coefficients $\tilde{b}$ and $\tilde{\sigma}$ are also independent of time.
This simplifies the coefficient distance measures $B$ and $S$ into more intuitive weighted $L^p$ norms. Subsequently, we consider the case where the coefficient distance is measured by the supremum norm (Section \ref{supnorm_case}).
This alternative framework allows for a significant relaxation of the continuity and boundedness assumptions on the coefficients.
Second, we provide the detailed proofs for several technical lemmas that were stated without proof in the main body of the paper to maintain the flow of the argument.

\subsection{The time-homogeneous case}\label{time-homogeneous_case}
We now present an important special case of our main result, Theorem \ref{mainthm}, where all coefficients are time-homogeneous.
This setting serves three crucial purposes.
First, it illustrates our main theorem in a more concrete and intuitive context.
Second, it clarifies the meaning of the abstract distance measures $B$ and $S$, showing that they naturally reduce to the weighted $L^p$-norms.
%We now present an important special case of our main result, Theorem \ref{mainthm}, where the perturbed coefficients $\tilde{b}$ and $\tilde{\sigma}$ are also independent of time.
%This corollary not only illustrates our main theorem in a more concrete setting but also clarifies the meaning of the general coefficient distance measures $B$ and $S$ in this context, showing that they naturally reduce to $L^p\left(\mu_{x_0,\sigma}^{\alpha,T}\right)$-norms.
Finally, this corollary provides the foundational stability estimate for analyzing the convergence of approximating sequences of SDEs, which will be the primary application demonstrated in Section \ref{Construct}.

%We now present an important special case of our main result, Theorem \ref{mainthm}, where the perturbed coefficients $\tilde{b}$ and $\tilde{\sigma}$ are also independent of time.
%This corollary recovers the result for time-homogeneous SDEs and clarifies the meaning of the weighted norms $B$ and $S$ in that context.
%When $\tilde{b}$ and $\sigma$ is independent of time, a more intuitive evaluation formula is obtained.
\begin{Cor}\label{Cor_time-homogeneous}
Let $Z$ be a one-dimensional symmetric $\alpha$-stable process with $\alpha \in (1,2)$. Consider two processes, $X$ and $\tilde{X}$, satisfying the SDEs
\begin{align*}
X_t = x_0 + \int_0^t b(X_s) \rd s + \int_0^t \sigma(X_{s-}) \rd Z_s \quad \text{and} \quad \tilde{X}_t = \tilde{x}_0 + \int_0^t \tilde{b}(\tilde{X}_s) \rd s +  \int_0^t \tilde{\sigma}(\tilde{X}_{s-}) \rd Z_s,
\end{align*}
for $t\in[0,T]$ with initial values $x_0, \tilde{x}_0\in\real$.

Suppose the coefficients satisfy the following conditions for some constants $K, k, K'>0$, and $\tilde{\eta} \in [1/\alpha,1]$:
\begin{itemize}
    \item[(i)'] The drift coefficient $b$ is continuous bounded and $\tilde{b}$ is Lipschitz continuous:
    \begin{align*}
       |b(x)|\le K, \quad |\tilde{b}(x)-\tilde{b}(y)|\le K |x-y|. 
    \end{align*}
    \item[(ii)'] The jump coefficients $\sigma$ and $\tilde{\sigma}$ are bounded and H\"older continuous, and $\sigma$ is strictly positive:
    \begin{align*}
       k \le \sigma(x)\le K, \quad |\sigma^\alpha(x)-\sigma^\alpha(y)|\le K|x-y|^\eta,\\
       |\tilde{\sigma}(x)|\le K', \quad |\tilde{\sigma}(x)-\tilde{\sigma}(y)|\le K|x-y|^{\tilde{\eta}}.
    \end{align*}
    \item[(iii)'] The distances between the coefficients are small in the weighted norms defined in \eqref{weighted_norm}:
    \begin{align*}
       B \coloneqq \|b-\tilde{b}\|_{L^1\left(\mu_{x_0,\sigma}^{\alpha,T}\right)} < 1, \quad S \coloneqq \|\sigma-\tilde{\sigma}\|_{L^\alpha\left(\mu_{x_0,\sigma}^{\alpha,T}\right)} < 1. 
    \end{align*}
\end{itemize}
Then, there exists a positive constant $C = C(T, \alpha, K, k)$ such that\\
\noindent (1) Convergence in Expectation:
\begin{align*}
\sup_{t \in [0,T]} \e\left[|X_t-\tilde{X}_t|^{\alpha-1}\right] \le 
 \begin{cases}
    C\left\{\left|x_0-\tilde{x}_0\right|^{\alpha-1}+ \max\left\{B^{(\alpha\tilde{\eta}-1)/(\alpha\tilde{\eta}-\alpha+1)}, S^{\alpha-1/\tilde{\eta}}\right\}\right\}& \textrm{if~~}\tilde{\eta} \in(1/\alpha,1]\\
	C\left[\left|x_0-\tilde{x}_0\right|^{\alpha-1}+ \left\{\log\left(\frac{1}{\max\left\{B, S\right\}}\right)\right\}^{-1}\right] & \textrm{if~~}\tilde{\eta} =1/\alpha,
  \end{cases}
\end{align*}
\noindent (2) Convergence in Probability: For any $h>0$,
\begin{align*}
\p\left(\sup_{t \in [0,T]}|X_t-\tilde{X}_t|^{\alpha-1}>h\right) \le
 \begin{cases}
    \frac{C}{h}\left\{\left|x_0-\tilde{x}_0\right|^{\alpha-1}+ \max\left\{B^{(\alpha\tilde{\eta}-1)/(\alpha\tilde{\eta}-\alpha+1)}, S^{\alpha-1/\tilde{\eta}}\right\}\right\}& \text{if~~}\tilde{\eta} \in(1/\alpha,1]\\
	\frac{C}{h}\left[\left|x_0-\tilde{x}_0\right|^{\alpha-1}+ \left\{\log\left(\frac{1}{\max\left\{B, S\right\}}\right)\right\}^{-1}\right] & \textrm{if~~}\tilde{\eta} =1/\alpha.
 \end{cases}
\end{align*}
\end{Cor}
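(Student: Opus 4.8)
The plan is to obtain Corollary~\ref{Cor_time-homogeneous} directly from Theorems~\ref{mainthm} and \ref{mainthm2}; the only point requiring genuine work is to see that the weighted $L^p$ norms in hypothesis (iii)' are equivalent, up to constants depending only on $T,\alpha,k,K$, to the space--time integrals $B$ and $S$ appearing in Theorem~\ref{mainthm}. First I would check that (i)'--(iii)' are a special case of (i)--(iii): with $\tilde b(t,x)=\tilde b(x)$ and $\tilde\sigma(t,x)=\tilde\sigma(x)$ one may take $f_{\tilde b}\equiv K$, $g_{\tilde\sigma}\equiv K'$ and $f_{\tilde\sigma}\equiv K$, so that $\int_0^T f_{\tilde b}(s)\,\rd s=KT$, $(\int_0^T g_{\tilde\sigma}^2(s)\,\rd s)^{1/2}=K'\sqrt T$ and $(\int_0^T f_{\tilde\sigma}^\alpha(s)\,\rd s)^{1/\alpha}=KT^{1/\alpha}$ are all finite, while the conditions on $b$ and $\sigma$ coincide with those of Theorem~\ref{mainthm}. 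In particular the constant produced by the main theorems then depends only on $T,\alpha,K,k$, as claimed (note $K'$ does not enter, consistent with the remark that $C$ is independent of the upper bound on $|\tilde\sigma|$).

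The core step is the identification of $B$ and $S$. Since $\tilde b$ and $\tilde\sigma$ are time-independent, Fubini--Tonelli gives
\[
B=\int_\real \left|b(y)-\tilde b(y)\right|\left(\int_0^T p_s(x_0,y)\,\rd s\right)\rd y,\qquad
S^\alpha=\int_\real \left|\sigma(y)-\tilde\sigma(y)\right|^\alpha\left(\int_0^T p_s(x_0,y)\,\rd s\right)\rd y,
\]
so it suffices to establish the pointwise two-sided bound $\int_0^T p_s(x_0,y)\,\rd s \asymp \frac{\rd\mu_{x_0,\sigma}^{\alpha,T}}{\rd y}(y)$, with implied constants depending only on $T,\alpha,k,K$. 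For this I would invoke the two-sided density estimate $m\,p_s^0(x_0,y)\le p_s(x_0,y)\le M\,p_s^0(x_0,y)$ of \eqref{SDE_density_estimate}, use $k\le\sigma\le K$ to interchange $\sigma(y)$ and $\sigma(x_0)$ at the cost of a constant, and use $g^{(\alpha)}(z)\asymp(|z|\vee1)^{-1-\alpha}$; then I would evaluate $\int_0^T p_s^0(x_0,y)\,\rd s$ by splitting the $s$-range at $s=|y-x_0|^\alpha$. On $\{s\le|y-x_0|^\alpha\}$ the integrand is $\asymp s\,|y-x_0|^{-1-\alpha}$, and on $\{s>|y-x_0|^\alpha\}$ it is $\asymp s^{-1/\alpha}$; integrating each piece shows the total is comparable to $T^{1-1/\alpha}$ when $|y-x_0|\le T^{1/\alpha}$ and to $T^2|y-x_0|^{-1-\alpha}$ when $|y-x_0|>T^{1/\alpha}$, i.e. comparable to $T\,\frac{\rd\mu_{x_0,\sigma}^{\alpha,T}}{\rd y}(y)$ in both regimes. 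Hence $B\asymp\|b-\tilde b\|_{L^1(\mu_{x_0,\sigma}^{\alpha,T})}$ and $S^\alpha\asymp\|\sigma-\tilde\sigma\|_{L^\alpha(\mu_{x_0,\sigma}^{\alpha,T})}^\alpha$, with constants depending only on $T,\alpha,k,K$.

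Finally I would conclude. If the weighted norms in (iii)' lie below a threshold $\theta_0(T,\alpha,k,K)$, the comparison above forces the abstract $B,S$ of Theorem~\ref{mainthm} to be $<1$, so Theorems~\ref{mainthm} and \ref{mainthm2} apply; since the exponents $(\alpha\tilde\eta-1)/(\alpha\tilde\eta-\alpha+1)$ and $\alpha-1/\tilde\eta$ are strictly positive for $\tilde\eta\in(1/\alpha,1]$ and the maps $x\mapsto x^\gamma$, $x\mapsto(\log(1/x))^{-1}$ are increasing on $(0,1)$, the comparison lets me replace the abstract $B,S$ by the weighted norms after adjusting the constant, which yields both assertions (1) and (2). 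The remaining range, where one of the weighted norms is $\ge\theta_0$, is disposed of by enlarging $C$ and invoking elementary a priori moment bounds on $X$ and $\tilde X$ (the right-hand sides are then bounded below by a positive constant, resp.\ a positive constant times $h^{-1}$, while $\sup_t\e[|X_t-\tilde X_t|^{\alpha-1}]$ and $\e[\sup_t|X_t-\tilde X_t|^{\alpha-1}]$ are finite). I expect the main obstacle to be the bookkeeping in the time integral $\int_0^T p_s^0(x_0,y)\,\rd s$: tracking the change in the power of $s$ across the crossover $s\asymp|y-x_0|^\alpha$ and confirming that the $T$-dependent prefactors reproduce $\mu_{x_0,\sigma}^{\alpha,T}$ up to constants. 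Everything else is a transcription of the two main theorems.
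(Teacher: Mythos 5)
Your proposal is correct and follows the same overall route as the paper: specialize Theorem \ref{mainthm} (and Theorem \ref{mainthm2}) to constant-in-time coefficients, note $f_{\tilde b}\equiv K$, $f_{\tilde\sigma}\equiv K$, $g_{\tilde\sigma}\equiv K'$, and identify the abstract space--time error measures with the weighted $L^p(\mu_{x_0,\sigma}^{\alpha,T})$ norms via the transition-density estimate \eqref{SDE_density_estimate}. Where you go further than the paper is in two places, both of which are genuine gaps in the paper's one-paragraph argument. First, the paper asserts $\int_0^T\!\int_\real |b-\tilde b|\,p_s(x_0,y)\,\rd s\,\rd y\le M\|b-\tilde b\|_{L^1(\mu_{x_0,\sigma}^{\alpha,T})}$ directly from \eqref{SDE_density_estimate}; that estimate only controls $p_s$ at each fixed $s$, and passing from $\int_0^T p_s^0(x_0,y)\,\rd s$ to the density of $\mu_{x_0,\sigma}^{\alpha,T}$ still requires the time-integral computation you carry out (splitting at $s\asymp |y-x_0|^\alpha$ and showing the result is $\asymp T\cdot\rd\mu_{x_0,\sigma}^{\alpha,T}/\rd y$ in both regimes; your arithmetic checks out). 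Second, the paper takes for granted that the hypothesis $\|b-\tilde b\|_{L^1(\mu)}<1$ forces the abstract $B<1$ required by Theorem \ref{mainthm}, which is false in general because the comparison constant $M$ need not be $\le 1$; your fix — a threshold $\theta_0$ depending on $(T,\alpha,k,K)$, together with monotonicity of $x\mapsto x^\gamma$ and $x\mapsto(\log 1/x)^{-1}$ on $(0,1)$ to absorb $M$ into $C$, and a priori moment bounds to dispose of the range $\theta_0\le\|\cdot\|_{L^1(\mu)}<1$ by enlarging $C$ — is exactly what is needed and is missing from the paper's proof. One small caveat worth keeping in mind for the fallback case: $\tilde b$ is only Lipschitz, hence possibly unbounded with linear growth, so the a priori bound on $\sup_t|\tilde X_t|$ in $L^p$ for $p\in(1,\alpha)$ requires a Gronwall argument rather than the direct bound used in Lemma \ref{Uni_integ_Xn}; this is routine but should be said. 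Otherwise the proof is complete and somewhat tighter than the paper's.
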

\begin{proof}
This result follows from an application of Theorem \ref{mainthm} to the time-homogeneous setting.
The main task is to show that the general coefficient distance measures $B$ and $S$ from Theorem \ref{mainthm} can be bounded by the weighted norms stated in assumption (iii)' of this corollary.
Let us consider the drift term $B$.
By applying the two-sided estimate for the transition density $p_s$ from inequality \eqref{SDE_density_estimate}, we have
\begin{align*}
    \int_0^T \int_\real |b(y)-\tilde{b}(y)|p_s(x_0,y) \rd s \rd y \le M \left\|b-\tilde{b}\right\|_{L^1\left(\mu_{x_0,\sigma}^{\alpha,T}\right)}.
\end{align*}
where the measure $\mu_{x_0,\sigma}^{\alpha,T}$ is defined via the principal part of the transition density.
%Therefore if $B$ and $S$ are small, then these satisfy the condition (iii) required by Theorem \ref{mainthm}.
% (up to a constant factor, which can be absorbed into the final constant $C$).
The term $S$ follows analogously.
The significance of this weighted norm lies in the properties of the transition density $p_t^0(x_0,\cdot)$.
As noted in Section \ref{SDE_density_section}, the density $p_t^0(x_0,\cdot)$ is constructed from $g^{(\alpha)}$, which has heavy tails decaying at a rate of $(|\cdot|\vee 1)^{-1-\alpha}$.
This decay property is inherited by our measure $\mu_{x_0,\sigma}^{\alpha,T}$, ensuring that it gives much less weight to points far from the initial value $x_0$.
This effectively localizes the error measure to the region where the process is most likely to be found, making the norm particularly well-suited for analyzing stochastic processes.
With the connection between $B, S$ and the weighted norms established, the main stability estimate from Theorem \ref{mainthm} directly applies. 

%The significance of this weighted norm lies in the properties of $p_t^0(x_0,\cdot)$.
%As noted in Subsection \ref{SDE_density_section}, the density $p_t^0(x_0,\cdot)$ is constructed from $g^{(\alpha)}$, which decays at a rate of $(|\cdot|\vee 1)^{-1-\alpha}$.
%This ensures that the measure $\mu_{x_0,\sigma}^{\alpha,T}$ gives much less weight to points far from the initial value $x_0$, effectively localizing the error measure.
%The main stability estimate from Theorem \ref{mainthm} then holds.

%This result is a direct consequence of Theorem \ref{mainthm}.
%When $\tilde{b}(t,x)=\tilde{b}(x)$ and $\tilde{\sigma}(t,x)=\tilde{\sigma}(x)$, the coefficient distances $B$ and $S$ defined in Theorem \ref{mainthm} (iii) reduce to the weighted norms stated here, by separating the time integral. For instance, $B$ becomes:
%\begin{align*}
%    \int_0^T \int_\real |b(y)-\tilde{b}(y)|p_s(x_0,y) \rd s \rd y \le M \|b-\tilde{b}\|_{L^1\left(\mu_{x_0,\sigma}^{\alpha,T}\right)}
%\end{align*}
%where the measure $\mu_{x_0,\sigma}^{\alpha,T}$ is obtained by applying the evaluation formula \eqref{SDE_density_estimate} immediately before deriving equation \eqref{alpha-1_norm_est}.
%The term $S$ follows analogously.
%The main estimate then holds directly.
\end{proof}
%This norm $\|\cdot\|_{L^1\left(\mu_{x_0,\sigma}^{\alpha,T}\right)}$ has the property that the further it deviates from the initial value $x_0$, the closer it approaches zero.

\subsection{An alternative framework: The supremum norm case}\label{supnorm_case}
We now investigate the stability problem under a different set of assumptions, where the distances between coefficients are measured using the supremum norm.
This alternative framework is motivated by the fact that it allows for a significant relaxation of the continuity and boundedness assumptions imposed on the coefficients in Theorem \ref{mainthm}.
While this approach is less general in its handling of time-dependence compared to our main theorem, it provides a powerful result under different conditions.
%Specifically, we show that if the coefficient differences are measured in the supremum norm instead of the weighted $L^p\left(\mu_{x_0,\sigma}^{\alpha,T}\right)$-norm, the boundedness and continuity assumptions on some of the coefficients can be significantly relaxed.
\begin{theorem}\label{mainthm_sup1}
Let $Z$ be a one-dimensional symmetric $\alpha$-stable process with $\alpha \in (1,2)$. Consider two processes, $X$ and $\tilde{X}$, satisfying the SDEs
\begin{align*}
X_t = x_0 + \int_0^t b(s,X_s) \rd s + \int_0^t \sigma(s,X_{s-}) \rd Z_s \quad \text{and} \quad \tilde{X}_t = \tilde{x}_0 + \int_0^t \tilde{b}(s,\tilde{X}_s) \rd s +  \int_0^t \tilde{\sigma}(s,\tilde{X}_{s-}) \rd Z_s,
\end{align*}
for $t\in[0,T]$ with initial values $x_0, \tilde{x}_0\in\real$.
Let the framework be the same as in Theorem \ref{mainthm}.
However, instead of the assumptions (i)-(iii) therein, suppose the coefficients satisfy the following conditions for some constant $\tilde{\eta} \in [1/\alpha,1]$:
\begin{itemize}
    \item[(a)] The coefficient $\tilde{b}$ is Lipschitz continuous and $\tilde{\sigma}$ is $\tilde{\eta}$-H\"older continuous in space: there exist nonnegative functions $f_{\tilde{b}}$ and $f_{\tilde{\sigma}}$ such that
    \begin{align*}
       |\tilde{b}(t,x)-\tilde{b}(t,y)| \le f_{\tilde{b}}(t) |x-y|
       ,\ |\tilde{\sigma}(t,x)-\tilde{\sigma}(t,y)| \le f_{\tilde{\sigma}}(t)|x-y|^{\tilde{\eta}}
       ,\ \int_0^T f_{\tilde{b}}(s)\rd s<\infty
       ,\ \int_0^T f_{\tilde{\sigma}}^\alpha(s)\rd s<\infty.
    \end{align*}
    \item[(b)] The differences between coefficients are small in the supremum norm.
    Let the quantities $B_\infty$ and $S_\infty$ be defined as follows.
    For the drift coefficients, $B_\infty$ is defined by either
    \begin{align*}
        B_\infty := \int_0^T \left\|b(s,\cdot) - \tilde{b}(s,\cdot)\right\|_\infty \rd s \quad \text{or} \quad B_\infty := \sup_{t\in[0,T]} \left\|b(t,\cdot) - \tilde{b}(t,\cdot)\right\|_\infty.
    \end{align*}
    For the jump coefficients, $S_\infty$ is defined by either
    \begin{align*}
        S_\infty := \left(\int_0^T \left\|\sigma(s,\cdot) - \tilde{\sigma}(s,\cdot)\right\|_\infty^\alpha \rd s \right)^{1/\alpha} \quad \text{or} \quad S_\infty := \sup_{t\in[0,T]} \left\|\sigma(t,\cdot) - \tilde{\sigma}(t,\cdot)\right\|_\infty.
    \end{align*}
    The theorem's conclusions hold provided that there exists a choice of definitions for $B_\infty$ and $S_\infty$ from the options above such that $B_\infty<1$ and $S_\infty<1$.
    \item[(c)] Additionally, the squared difference of the jump coefficients is integrable:
    \begin{align*}
        \int_0^T \left\|\sigma(s,\cdot)-\tilde{\sigma}(s,\cdot)\right\|^2_\infty\rd s<\infty.
    \end{align*}
    These can be set as either $B_\infty$ or $S_\infty$.
\end{itemize}
Then, under these assumptions, the conditions on the coefficients can be significantly relaxed compared to Theorem \ref{mainthm}.
Notably:
\begin{itemize}
    \item The coefficients $b$ and $\sigma$ of the baseline process $X$ are no longer required to be continuous or bounded; mere measurability is sufficient. The only condition required for these coefficients is that a weak solution of $X$ exists.
    \item The coefficients $\tilde{b}$ and $\tilde{\sigma}$ of the perturbed process $\tilde{X}$ are no longer required to be bounded. Note that $\tilde{X}$ is assumed to have a strong solution.
    \item One may define $B_\infty$ using either the $L^1([0,T]; L^\infty)$ or $L^\infty([0,T]; L^\infty)$ norm of $b-\tilde{b}$. The former typically provides a tighter smallness condition for short time intervals, whereas the latter is preferable for a long time horizon where the integral norm might exceed unity. The same applies to $S_\infty$.
\end{itemize}
There exists a positive constant $C = C\left(T, \alpha, \int_0^T f_{\tilde{b}}(s)\rd s, \int_0^T f_{\tilde{\sigma}}(s)\rd s\right)$ such that the following stability estimates hold:

\noindent (1) Convergence in expectation:
\begin{align*}
\sup_{t \in [0,T]} \e\left[|X_t-\tilde{X}_t|^{\alpha-1}\right] \le 
 \begin{cases}
    C\left\{\left|x_0-\tilde{x}_0\right|^{\alpha-1}+ \max\left\{B_\infty^{(\alpha\tilde{\eta}-1)/(\alpha\tilde{\eta}-\alpha+1)}, S_\infty^{\alpha-1/\tilde{\eta}}\right\}\right\}& \textrm{if~~}\tilde{\eta} \in(1/\alpha,1]\\
	C\left[\left|x_0-\tilde{x}_0\right|^{\alpha-1}+ \left\{\log\left(\frac{1}{\max\left\{B_\infty, S_\infty\right\}}\right)\right\}^{-1}\right] & \textrm{if~~}\tilde{\eta} =1/\alpha,
  \end{cases}
\end{align*}
\noindent (2) Convergence in probability: For any $h>0$,
\begin{align*}
\p\left(\sup_{t \in [0,T]}|X_t-\tilde{X}_t|^{\alpha-1}>h\right) \le
 \begin{cases}
    \frac{C}{h}\left\{\left|x_0-\tilde{x}_0\right|^{\alpha-1}+ \max\left\{B_\infty^{(\alpha\tilde{\eta}-1)/(\alpha\tilde{\eta}-\alpha+1)}, S_\infty^{\alpha-1/\tilde{\eta}}\right\}\right\}& \text{if~~}\tilde{\eta} \in(1/\alpha,1]\\
	\frac{C}{h}\left[\left|x_0-\tilde{x}_0\right|^{\alpha-1}+ \left\{\log\left(\frac{1}{\max\left\{B_\infty, S_\infty\right\}}\right)\right\}^{-1}\right] & \textrm{if~~}\tilde{\eta} =1/\alpha.
 \end{cases}
\end{align*}
\end{theorem}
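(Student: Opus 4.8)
The plan is to re-run the proof of Theorem~\ref{mainthm} essentially line by line, isolating the two places where its hypotheses (i)--(iii) are genuinely used and replacing them by the weaker inputs (a)--(c).

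Set $Y_t\coloneqq X_t-\tilde{X}_t$. Since the mollifier $\pde$ and the function $\ude$ are time-independent, Lemmas~\ref{abstruct} and~\ref{Lude} apply verbatim, and the only effect of letting the baseline coefficients depend on time is that every occurrence of $b(X_s)$ and $\sigma(X_{s-})$ in the It\^o decomposition~\eqref{udeYt} is replaced by $b(s,X_s)$ and $\sigma(s,X_{s-})$. The splitting of the drift difference used in~\eqref{Itb}, now $b(s,X_s)-\tilde{b}(s,\tilde{X}_s)=\bigl(b(s,X_s)-\tilde{b}(s,X_s)\bigr)+\bigl(\tilde{b}(s,X_s)-\tilde{b}(s,\tilde{X}_s)\bigr)$, and the analogous splitting for $\sigma$, have exactly the same structure; hence the bounds \eqref{Itb}--\eqref{Itb_term2} for the drift term and \eqref{Itdeltaeps} for the jump-compensator term remain valid, using only the spatial Lipschitz and $\tilde{\eta}$-H\"older regularity from hypothesis~(a) (in place of (i)--(ii)) together with the sharp derivative estimate~\eqref{ude'ineq} and the bound on $\pde$ in Lemma~\ref{abstruct}(i). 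Applying Gronwall's inequality and taking expectations as in the proof of Theorem~\ref{mainthm} then yields inequality~\eqref{alpha-1_norm_est}, but with $B$ and $S$ replaced by the time-averaged expectations
\[
\int_0^T\e\bigl[\,\left|b(s,X_s)-\tilde{b}(s,X_s)\right|\,\bigr]\,\rd s
\quad\text{and}\quad
\left(\int_0^T\e\bigl[\,\left|\sigma(s,X_s)-\tilde{\sigma}(s,X_s)\right|^\alpha\,\bigr]\,\rd s\right)^{1/\alpha}.
\]

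The only step that genuinely differs from the proof of Theorem~\ref{mainthm} is the passage from these time-averaged expectations to $B_\infty$ and $S_\infty$. In Theorem~\ref{mainthm} that passage went through the transition density $p_s(x_0,\cdot)$, and it is precisely the existence and two-sided bounds~\eqref{SDE_density}--\eqref{SDE_density_estimate} for that density that force $b$ and $\sigma$ to be continuous and bounded. Here we use instead only the trivial pointwise bound $\e[\,|f(s,X_s)|\,]\le\|f(s,\cdot)\|_\infty$, valid for every measurable $f$: this gives at once $\int_0^T\e[\,|b(s,X_s)-\tilde{b}(s,X_s)|\,]\,\rd s\le\int_0^T\|b(s,\cdot)-\tilde{b}(s,\cdot)\|_\infty\,\rd s$ and $\le T\sup_{t\in[0,T]}\|b(t,\cdot)-\tilde{b}(t,\cdot)\|_\infty$, and similarly for the $\alpha$-th moment of the jump difference. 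Picking in each case whichever of the two options is $<1$ (the choices for the drift and for the jump part being independent, as allowed by hypothesis~(b)) and absorbing the harmless factor $T$ into the constant, inequality~\eqref{alpha-1_norm_est} holds verbatim with $(B_\infty,S_\infty)$ in place of $(B,S)$. The optimisation of the free parameters $\eps>0$ and $\delta>1$ over the two ranges $\tilde{\eta}\in(1/\alpha,1]$ and $\tilde{\eta}=1/\alpha$ is then word for word the same as in the proof of Theorem~\ref{mainthm} and produces assertion~(1); assertion~(2) follows by substituting the decomposition~\eqref{udeYt} and the bounds~\eqref{Itb}--\eqref{Itdeltaeps} into the quasi-martingale argument behind Theorem~\ref{mainthm2}, namely Lemma~\ref{quasiprop} applied to $\ude(Y)$ together with~\eqref{abstructYt}.

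The step I expect to be the main obstacle is the true-martingale property of $M^{\delta,\eps}$, which in Theorem~\ref{mainthm} relied on the boundedness of the jump coefficients and must be re-established since $\tilde{\sigma}$ is now only spatially $\tilde{\eta}$-H\"older with an $L^\alpha[0,T]$ modulus. I would handle this by localisation: set $\tau_n\coloneqq\inf\{t\ge0:\ |Y_t|\vee|Y_{t-}|\ge n\}$, carry out the whole argument above for the stopped process $Y_{\cdot\wedge\tau_n}$ (for which $\e[M^{\delta,\eps}_{t\wedge\tau_n}]=0$ by the estimates that follow), and then let $n\to\infty$; since $Y$ is c\`adl\`ag one has $\tau_n\uparrow\infty$ a.s., so Fatou's lemma transfers~\eqref{alpha-1_norm_est} to $Y$ itself and removes any circularity. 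For fixed $n$ the stopped jump integrand is controlled through $|\ude(a)-\ude(b)|\le\|\ude'\|_\infty|a-b|$ (finite by~\eqref{ude'ineq}) and $|\sigma(s,X_s)-\tilde{\sigma}(s,\tilde{X}_s)|\le\|\sigma(s,\cdot)-\tilde{\sigma}(s,\cdot)\|_\infty+f_{\tilde{\sigma}}(s)\,n^{\tilde{\eta}}$ on $\{s\le\tau_n\}$; the required integrability is then supplied by hypothesis~(c) (for the $\sigma(s,\cdot)-\tilde{\sigma}(s,\cdot)$ part), by $f_{\tilde{\sigma}}\in L^\alpha[0,T]$ (for the $\tilde{\sigma}(s,X_s)-\tilde{\sigma}(s,\tilde{X}_s)$ part), and by $\alpha>1$, which makes $\int_{\real}\bigl(|cz|^2\wedge|cz|\bigr)\,\nu(\rd z)$ finite and comparable to $|c|^\alpha$ -- exactly as in the $L^1$--$L^2$ decomposition argument of \cite[Subsection~5.1]{Nakagawa}. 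Once this is in place the advertised relaxation of the continuity and boundedness assumptions on $b$ and $\sigma$ is automatic, since those assumptions entered the proof of Theorem~\ref{mainthm} solely through the density estimates~\eqref{SDE_density}--\eqref{SDE_density_estimate}, which are never used in the present argument.
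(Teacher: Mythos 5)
Your proposal follows the same route as the paper: re-run the proof of Theorem~\ref{mainthm} line by line, replacing the density-weighted bound
$\int_0^T\e[|b(s,X_s)-\tilde b(s,X_s)|]\,\rd s=\int_0^T\int_\real|b(y)-\tilde b(s,y)|\,p_s(x_0,y)\,\rd s\,\rd y$
by the elementary estimate $\e[|f(s,X_s)|]\le\|f(s,\cdot)\|_\infty$, and then run the identical parameter optimisation. The paper's own proof sketch for Theorem~\ref{mainthm_sup1} is exactly this, and it explicitly credits the relaxation of the regularity of $b,\sigma$ to the disappearance of any reliance on the transition density \eqref{SDE_density}--\eqref{SDE_density_estimate}.

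Where you genuinely add value is in the treatment of the martingale property of $M^{\delta,\eps}$. The paper disposes of this in a single sentence, asserting that hypothesis~(c) suffices and pointing to Subsection~\ref{M-martingale}; but there the quadratic-variation calculation uses the a priori bound $|\sigma(X_s)-\tilde\sigma(s,\tilde X_s)|\le K+g_{\tilde\sigma}(s)$, which is unavailable once $\tilde\sigma$ is only required to be spatially $\tilde\eta$-H\"older with an $L^\alpha$ modulus. Under (a)--(c) one only gets $|\sigma(s,X_s)-\tilde\sigma(s,\tilde X_s)|\le\|\sigma(s,\cdot)-\tilde\sigma(s,\cdot)\|_\infty+f_{\tilde\sigma}(s)|Y_s|^{\tilde\eta}$, and the second summand is not square-integrable a priori; the same issue arises in applying Gronwall, which needs $\sup_t\e[|Y_t|^{\alpha-1}]<\infty$ in advance. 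Your localisation via $\tau_n=\inf\{t:|Y_t|\vee|Y_{t-}|\ge n\}$, proving the estimate for $Y_{\cdot\wedge\tau_n}$ and passing to the limit with Fatou, is precisely what closes this gap and removes the implicit circularity. So the proposal is not merely correct but actually supplies a step that the paper's writeup leaves tacit.
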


\begin{proof}[Proof Sketch for Theorem \ref{mainthm_sup1}]
The proof follows the exact same logical steps as the proofs of Theorems \ref{mainthm} and \ref{mainthm2}: applying Ito's formula, estimating each term, using Gronwall's inequality, and optimizing the parameters.
The only modification lies in how the integral terms involving coefficient differences are bounded.
Specifically, the integral term $\int_0^T \e[|b(s,X_s)-\tilde{b}(s,X_s)|] ds$ in inequality \eqref{alpha-1_est} is no longer connected to a weighted norm.
Instead, it is bounded directly using the definition of $B_\infty$:
\begin{align*}
   \int_0^T \e[|b(s,X_s)-\tilde{b}(s,X_s)|] ds \le
   \begin{cases}
   \int_0^T \left\|b(s,\cdot)-\tilde{b}(s,\cdot)\right\|_\infty \rd s & \textrm{if~} B_\infty = \int_0^T \left\|b(s,\cdot)-\tilde{b}(s,\cdot)\right\|_\infty \rd s,\\
   T\sup_{t\in[0,T]}\|b(t,\cdot)-\tilde{b}(t,\cdot)\|_\infty \rd s & \textrm{if~} B_\infty = \sup_{t\in[0,T]}\left\|b(t,\cdot)-\tilde{b}(t,\cdot)\right\|_\infty.
   \end{cases}
\end{align*}
A similar direct bound is used for the term involving $S_\infty$.
This modification is the key to the main advantage of this theorem: since the estimation does not require the transition density $p_t(x_0,\cdot)$, the assumptions on the coefficients $b$ and $\sigma$ of the baseline process $X$ can be relaxed to mere measurability.
%Since this approach does not require the transition density $p_t(x_0,\cdot)$, the continuity and boundedness assumptions on the coefficients $b$ and $\sigma$ of the baseline process $X$ are no longer necessary.
The martingale property of $M^{\delta,\eps}$ is also guaranteed under these assumptions, as the boundedness of $\int_0^T \left\|\sigma(s,\cdot)-\tilde{\sigma}(s,\cdot)\right\|^2_\infty\rd s$ is sufficient for local integrability (see Subsection \ref{M-martingale}).
The rest of the argument, including the parameter optimization, proceeds identically. 
\end{proof}

\subsection{Proof of the martingale property for $M^{\delta,\eps}$}\label{M-martingale}
This section is dedicated to proving that the process $M^{\delta,\eps}$, defined in \eqref{udeYt}, is a true martingale.
This property is crucial for the main proofs of Theorem \ref{mainthm}, \ref{mainthm2} and \ref{mainthm_sup1} and Corollary \ref{Cor_time-homogeneous}, as it allows us to eliminate the stochastic integral term when taking expectations.
The proof, which follows the argument in \cite[Subsection 5.1]{Nakagawa}, hinges on the properties of the mollified function $\ude$ introduced in Lemma \ref{abstruct}.
Specifically, we need to establish that $\ude$ is Lipschitz continuous, which is a consequence of its first derivative being bounded.
The following lemma summarizes the necessary properties.

\begin{Lem}\label{udelip}
The function $\ude$ is of class $C^2$.
Its first derivative, $\ude'$, satisfies the bound given in inequality \eqref{ude'ineq}, which implies that $\ude'$ is bounded for any fixed $\eps>0$ and $\delta>1$.
\end{Lem}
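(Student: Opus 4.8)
The plan is to establish the three claims of the lemma in sequence: the $C^2$ regularity of $\ude$, the pointwise derivative bound \eqref{ude'ineq}, and the boundedness of $\ude'$ for each fixed $\eps>0$, $\delta>1$. The regularity is the soft part: since $u(x)=|x|^{\alpha-1}$ lies in $L^1_{\mathrm{loc}}(\real)$ and $\pde$ is smooth with compact support (Lemma \ref{abstruct}(i)), the convolution $\ude=u\ast\pde$ is $C^\infty$---in particular $C^2$---with $\ude^{(k)}=u\ast\pde^{(k)}$ for every $k$. For the derivative I would first record that $u$ is locally absolutely continuous on $\real$ with $u'(z)=(\alpha-1)\sgn(z)|z|^{\alpha-2}$ for $z\neq0$, and that $u'\in L^1_{\mathrm{loc}}(\real)$ because $\alpha-2>-1$; then an integration by parts in the convolution (boundary terms vanish as $\pde$ has compact support) gives $\ude'=u\ast\pde'=u'\ast\pde$, so that, using $\pde\ge0$,
\[
|\ude'(x)|\;\le\;(\alpha-1)\int_{\eps\delta^{-1}}^{\eps}|x-y|^{\alpha-2}\,\pde(y)\,\rd y,\qquad x\in\real.
\]

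Next I would split according to whether $|x|>2\eps$ or $|x|\le2\eps$. When $|x|>2\eps$, every $y$ in the support of $\pde$ satisfies $|x-y|\ge|x|-\eps\ge|x|/2$, and since $\alpha-2<0$ this gives $|x-y|^{\alpha-2}\le 2^{2-\alpha}|x|^{\alpha-2}$; together with $\int\pde=1$ this produces the first term of \eqref{ude'ineq}. When $|x|\le2\eps$, I would instead use the explicit bound $\pde(y)\le 2(y\log\delta)^{-1}\le 2\delta(\eps\log\delta)^{-1}$ valid on the support, and then control $\int_{\eps\delta^{-1}}^{\eps}|x-y|^{\alpha-2}\,\rd y$ uniformly in $x$. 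The key elementary fact is that, because $\alpha-2\in(-1,0)$, for any interval $I$ of length $\ell$ and any point $x$ one has $\int_I|x-y|^{\alpha-2}\,\rd y\le\int_{-\ell/2}^{\ell/2}|t|^{\alpha-2}\,\rd t=\frac{2^{2-\alpha}}{\alpha-1}\,\ell^{\alpha-1}$, the extremal configuration being the one in which the singularity $y=x$ sits at the midpoint of $I$. Applying this with $\ell=\eps(1-\delta^{-1})$ (the length of $[\eps\delta^{-1},\eps]$), the factor $\alpha-1$ cancels and one is left with precisely the second term of \eqref{ude'ineq}.

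The boundedness of $\ude'$ is then immediate: on $[-2\eps,2\eps]$ the right-hand side of \eqref{ude'ineq} is a constant depending only on $\eps,\delta,\alpha$, while for $|x|>2\eps$ the estimate $|x|^{\alpha-2}\le(2\eps)^{\alpha-2}$ turns the first term into $2^{2-\alpha}(\alpha-1)|x|^{\alpha-2}\le(\alpha-1)/\eps^{2-\alpha}$, so $\|\ude'\|_\infty<\infty$ for each fixed $\eps>0$, $\delta>1$. I expect the single genuinely delicate point to be the uniform-in-$x$ estimate of $\int_{\eps\delta^{-1}}^{\eps}|x-y|^{\alpha-2}\,\rd y$ in the near-origin regime: this is exactly where the standing assumption $\alpha>1$ is used to guarantee integrability of $|\cdot|^{\alpha-2}$ near the origin, and it is what delivers the sharp $\eps^{-(2-\alpha)}$ scaling and the $\delta/\log\delta$ dependence of the constant---the refinement over the crude bound $\|u'\ast\pde\|_\infty$ that, as explained in Section \ref{distance}, is what makes the drift term $I_t^{\delta,\eps,b}$ controllable. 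The remaining verifications are routine.
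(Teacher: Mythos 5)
Your proposal is correct and follows essentially the same route as the paper's proof: the $C^\infty$ regularity from convolution, the representation $\ude'=u'\ast\pde$, and a case split on $|x|\lessgtr 2\eps$ in which the far-field bound uses $|x-y|\ge|x|-\eps\ge|x|/2$ and $\int\pde=1$, while the near-origin bound uses $\pde(y)\le 2\delta/(\eps\log\delta)$ together with the fact that $\int_{I}|x-y|^{\alpha-2}\rd y$ is maximized when $x$ sits at the midpoint of $I$. The only cosmetic difference is that you handle $|x|>2\eps$ in a single unified estimate, whereas the paper splits it into $x>2\eps$ and $x<-2\eps$; the underlying computations, including the cancellation of $\alpha-1$ and the resulting $2^{3-\alpha}\delta(1-\delta^{-1})^{\alpha-1}/(\eps^{2-\alpha}\log\delta)$ constant, coincide.
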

The proof of this lemma involves a straightforward case-by-case analysis.
We provide the details below.
\begin{proof}
The smoothness of $\ude$ follows from standard properties of convolutions.
Since $u(x)=|x|^{\alpha-1}$ is locally integrable and the mollifier $\pde$ is a smooth function with compact support (in fact, a Schwartz function), their convolution $\ude = u \ast \pde$ is infinitely differentiable.
The main task is to prove the bound on the first derivative, $\ude'$, stated in inequality \eqref{ude'ineq}.
Differentiating under the integral sign, we have:
\begin{align*}
    \ude'(x) &= (\alpha-1)\int_{\real} \mathrm{sgn}(x-y)|x-y|^{\alpha-2}\psi_{\delta,\varepsilon}(y)\rd y.
\end{align*}
Since $\pde$ has its support on the compact interval $\left[\eps\delta^{-1}, \eps\right]$, we can restrict the domain of integration.
By taking the absolute value, we obtain the initial bound:
\begin{align*}
    |\ude'(x)| \le (\alpha-1)\int_{\eps \delta^{-1}}^\eps |x-y|^{\alpha-2}\pde(y)\rd y.
\end{align*}
To derive the bound \eqref{ude'ineq} from this integral representation, we analyze the behavior of the integrand $|x-y|^{\alpha-2}$.
The estimate depends on the distance between $x$ and the support of $\pde$, which is $[\eps\delta^{-1,\eps]}$.
We therefore proceed with a case-by-case analysis based on the position of $x$: Case 1 ($x > 2\eps$), Case 2 ($x < -2\eps$), and Case 3 ($|x| \le 2\eps$).
%To derive the final estimate \eqref{ude'ineq} from this integral, we proceed by a case-by-case analysis based on the position of $x$: Case 1 ($x > 2\eps$), Case 2 ($x < -2\eps$), and Case 3 ($|x| \le 2\eps$).

Case 1:
In this regime, for any $y\in[\eps\delta^{-1},\eps]$, the difference $x-y>\eps>0$.
Since $\alpha\in(1,2)$, the function $z\mapsto z^{\alpha-2}$ is decreasing for $z>0$.
Therefore, we can bound the term $|x-y|^{\alpha-2}$
\begin{align*}
|x-y|^{\alpha-2}=(x-y)^{\alpha-2}\le (x-\eps)^{\alpha-2}.
\end{align*}
Since the condition $2\eps<x$ implies $x-\eps>x-x/2=x/2$, we obtain
\begin{align*}
    (x-\eps)^{\alpha-2} < (x/2)^{\alpha-2} = 2^{2-\alpha} x^{\alpha-2} = 2^{2-\alpha} |x|^{\alpha-2}
\end{align*}
Substituting this uniform bound into the integral for $|\ude'(x)|$ and using the fact that $\pde$ integrates to one, we obtain the desired estimate for this case:
\begin{align*}
     \left|\ude'(x)\right|\le 2^{2-\alpha}(\alpha-1)|x|^{\alpha-2}.
\end{align*}

Case 2:
For any $y\in[\eps\delta^{-1},\eps]$, the term $x-y<-2\eps-\eps\delta^{-1}<0$.
Since $|x-y|^{\alpha-2}$ achieves its maximum value $(|x|+\eps\delta^{-1})^{\alpha-2}$ at $y=\eps\delta^{-1}$,
\begin{align*}
    |\ude'(x)|
    &\le (\alpha-1)(|x|+\eps\delta^{-1})^{\alpha-2}\\
    &\le (\alpha-1)|x|^{\alpha-2}
\end{align*}
Finally, since $\alpha \in (1,2)$ implies $2^{2-\alpha} > 1$, the desired inequality $|u'_{\delta,\varepsilon}(x)| \le 2^{2-\alpha}(\alpha-1)|x|^{\alpha-2}$ is satisfied in this case as well.

Case 3:
In this final case, we bound the integral directly. We utilize the explicit upper bound for the mollifier, $\pde(y) \in\left[0,\frac{2}{y\log\delta}\right]$, which holds on its support $\left[\eps\delta^{-1}, \eps\right]$.
This gives:
\begin{align*}
    |\ude'(x)|
    \le (\alpha-1) \frac{2\delta}{\eps \log \delta} \int_{\eps \delta^{-1}}^\eps |x-y|^{\alpha-2}\rd y
\end{align*}
The integral term $\int |x-y|^{\alpha-2} dy$ depends on $x$.
To obtain a uniform bound for $|x| \le 2\eps$, we find the maximum value of this integral.
The function $x \mapsto \int_a^b |x-y|^{\alpha-2} dy$ is a convex function, and for a symmetric interval around the origin, its maximum would be at the endpoints.
In our case, the integral is maximized when $x$ is at the center of the integration interval, i.e., $x = (\eps\delta^{-1}+\eps)/2$.
By substituting this maximizing value of $x$, we can compute the upper bound for the integral:
\begin{align*}
    |\ude'(x)|
    &\le (\alpha-1) \frac{2\delta}{\eps \log \delta} \int_{\eps \delta^{-1}}^\eps \left|\frac{\eps+\eps\delta^{-1}}{2}-y\right|^{\alpha-2}\rd y,\\
%    &= (\alpha-1) \frac{2\delta}{\eps \log \delta} \frac{2^{2-\alpha}}{\alpha-1}\eps^{\alpha-1}(1-\delta^{-1})^{\alpha-1}\\
    &=\frac{2^{3-\alpha} \delta(1-\delta^{-1})^{\alpha-1}}{\eps^{2-\alpha}\log \delta}.
\end{align*}
This bound holds for all $|x| \le 2\varepsilon$.
Combining the results from all three cases completes the proof of inequality \eqref{ude'ineq}.

Finally, to justify the well-definedness of $L_\alpha \ude$ in Section \ref{Lude_well-defined}, we must also show that the second derivative, $\ude''$, is bounded.
Since $\ude$ is $C^\infty$, we can again differentiate under the integral sign twice.
%Finally, we demonstrate that the second derivative, $\ude''$, is bounded.
%Since $\ude = u \ast \pde$ and $\pde$ is a smooth function with compact support ($C_c^\infty$), we can differentiate under the integral sign twice.
This yields $\ude'' = u' \ast \pde'$, which can be written explicitly as:
\begin{align*}
    \ude''(x) = (\alpha-1) \int_\real \mathrm{sgn}(x-y)|x-y|^{\alpha-2} \pde'(y) \rd y.
\end{align*}
The support of the derivative $\pde'$ is also contained within $\left[\eps\delta^{-1}, \eps\right]$.
Furthermore, as $\pde$ is a $C_c^\infty$ function, its derivative $\pde'$ is bounded, i.e., $\|\pde'\|_\infty < \infty$.
By taking the absolute value and restricting the integration domain to the support of $\pde'$, we obtain the following bound:
\begin{align*}
    |\ude''(x)| 
    &\le (\alpha-1) \int_{\eps\delta^{-1}}^{\eps} |x-y|^{\alpha-2} |\pde'(y)| dy \\
    &\le (\alpha-1) \|\pde'\|_\infty \int_{\eps\delta^{-1}}^{\eps} |x-y|^{\alpha-2} \rd y.
\end{align*}
The remaining integral, $\int_{\eps\delta^{-1}}^{\eps} |x-y|^{\alpha-2} dy$, is finite for any $x \in \real$ because the singularity of the integrand at $y=x$ is of order $\alpha-2 > -1$, making it integrable.
As we saw in the analysis of Case 3 above, this integral attains its maximum for $x$ within or near the interval $\left[\eps\delta^{-1}, \eps\right]$, and this maximum value is finite.
Therefore, $|\ude''|$ is uniformly bounded for all $x \in \real$.
This completes the proof of Lemma \ref{udelip}.
We have established that both the first and second derivatives of $\ude$ are uniformly bounded, properties that are essential for the arguments that follow.
\end{proof}

With the properties of $\ude$ established, we now return to the proof that $M^{\delta,\eps}$ is a martingale.
Our strategy is to decompose $M^{\delta,\eps}$ into its large-jump and small-jump components, and show that each component is a martingale.
\begin{align*}
M_t^{\delta,\eps,1}&=\int_{0}^{t}\int_{|z|\ge 1}\{ \ude\left(Y_{s-}+\left(\sigma(X_{s-})-\tilde{\sigma}\left(\tilde{X}_{s-}\right)\right)z\right)-\ude\left(Y_{s-}\right)\}\tilde{N}(\rd z,\rd s),\\
M_t^{\delta,\eps,2}&=\int_{0}^{t}\int_{|z|< 1}\{ \ude\left(Y_{s-}+\left(\sigma(X_{s-})-\tilde{\sigma}\left(\tilde{X}_{s-}\right)\right)z\right)-\ude\left(Y_{s-}\right)\}\tilde{N}(\rd z,\rd s).
\end{align*}
Clearly, we have $M_t^{\delta,\eps} = M_t^{\delta,\eps,1} + M_t^{\delta,\eps,2}$.
We will now show that both $\left(M_t^{\delta,\eps,1}\right)_{0\le t\le T}$ and $\left(M_t^{\delta,\eps,2}\right)_{0\le t\le T}$ are martingales.

%%%%%%%
First, we consider the small-jump component, $\left(M_t^{\delta,\eps,2}\right)_{0\le t\le T}$.
We will show it is a square-integrable martingale by verifying that the expectation of its quadratic variation is finite.
Using the Lipschitz continuity of $\ude$ (from Lemma \ref{abstruct}) and the boundedness of the coefficients $\sigma$ and $\tilde{\sigma}$ (as assumed in Theorems \ref{mainthm} and \ref{mainthm2}), we can bound the expected quadratic variation as follows:
\begin{align*}
    \e \left[ \left\langle M^{\delta,\eps,2} \right\rangle_t \right]
    &=\e \left[\int_{0}^{t}\int_{|z|<1} \left|\ude\left(Y_s+\left(\sigxs\right)z\right)-\ude\left(Y_{s-}\right)\right|^2\nu(\rd z)\rd s\right]\\
    &\le \|\ude'\|_{\infty}^2 \e \left[\int_{0}^{t}\int_{|z|<1}\left|\left(\sigxs\right)z\right|^2\nu(\rd z)\rd s\right]\\
    &\le \|u'_{\delta,\eps}\|_{\infty}^2 \left(K^2 t + \int_0^t g_{\tilde{\sigma}}^2(s)\rd s\right) \int_{|z|<1} |z|^2 \nu(\rd z).
\end{align*}
This is finite because the integral $\int_{|z|<1} |z|^2 \nu(\rd z) = \int_{|z|<1} c_\alpha |z|^{1-\alpha} \rd z$ converges for $\alpha < 2$.
The same conclusion holds under the supremum norm assumption of Theorem \ref{mainthm_sup1}, where $K^2 t + \int_0^t g_{\tilde{\sigma}}^2(s)\rd s$ is replaced by a constant related to $\int_0^t \left\|\sigma(s,\cdot)-\tilde{\sigma}(s,\cdot)\right\|^2_\infty\rd s$.
Therefore, $\left(M_t^{\delta,\eps,2}\right)_{0\le t\le T}$ is a true $L^2$-martingale (see, e.g., \cite[Theorem 4.2.3]{Applebaum}, ).

Next, we address the large-jump component, $\left(M_t^{\delta,\eps,1}\right)_{0\le t\le T}$.
We show it is an $L^1$-martingale by demonstrating that the expected total variation of its jumps is finite.
The argument is similar:
\begin{align*}
    &\e \left[\int_{0}^{t}\int_{|z|<1} \left|\ude\left(Y_s+\left(\sigxs\right)z\right)-\ude\left(Y_{s-}\right)\right|\nu(\rd z)\rd s\right]\\
    &\le \|\ude\|_\infty \e \left[\int_0^t \int_{|z|\ge 1} \left|\left(\sigxs\right)z\right| \nu(\rd z)ds \right] \\
    &\le C \int_{|z|\ge 1} |z| \nu(\rd z).
\end{align*}
This final integral is finite because $\int_{|z|\ge 1} |z| \nu(\rd z) = \int_{|z|\ge 1} c_\alpha |z|^{-\alpha} \rd z$ converges for $\alpha > 1$.
Thus, $\left(M_t^{\delta,\eps,1}\right)_{0\le t\le T}$ is an $L^1(\Omega)$-martingale (see, e.g., \cite[P.231]{Applebaum}).

Since $M^{\delta,\eps}$ is the sum of two true martingales, we conclude that it is also a martingale.

\subsection{Regarding the well-definedness of a function $L_\alpha \ude$}\label{Lude_well-defined}
We now justify that the expression $L_\alpha \ude$ is well-defined, even though $\ude$ is not a rapidly decreasing function.
This requires showing that the defining integral for $L_\alpha \ude(x)$ converges for any $x\in\real$.
We split the integral into two domains: $|y| > 1$ and $|y| \le 1$.
For the integral over $|y| > 1$, the integrand behaves like $(\ude(x+y)-\ude(x))/|y|^{1+\alpha}$.
Since $\ude$ is Lipschitz continuous (a consequence of the boundedness of $\ude'$ established in Lemma \ref{udelip}), the numerator is bounded by $C|y|$, ensuring the integral converges for $\alpha>1$.
For the integral over $|y|\le 1$, we use a second-order Taylor expansion of the integrand around $y=0$.
The leading terms cancel, and the remainder is of the order $y^2 \ude''(x+\theta y)$ for some $\theta\in(0,1)$.
The integral thus behaves like $\int_{|y|\le 1} y^2/|y|^{1+\alpha}\rd y$.
This converges because $\ude''$ is uniformly bounded, as shown in Lemma \ref{udelip}, and the exponent $2-(1+\alpha)=1-\alpha$ is greater than $-1$.
Since the integral converges on both domains, we conclude that $L_\alpha \ude$ is well-defined.
%
%We now justify that the expression $L_\alpha u_{\delta,\varepsilon}$ is well-defined, even though $\ude$ is not a rapidly decreasing function.
%This requires showing that the defining integral for $L_\alpha \ude(x)$ converges for any $x \in \real$.
%The integral can be split into two parts: the integral over $|y|>1$ and the integral over $|y|\le 1$.
%The convergence of the integral over $|y|>1$ is guaranteed by the Lipschitz continuity of $\ude$, which was a consequence of the boundedness of $\ude'$ established in Lemma \ref{abstruct}.
%For the integral over $|y|\le 1$, we use a second-order Taylor expansion of the integrand.
%This requires the second derivative, $\ude''$, to be bounded.
%As we show now, this is indeed the case.
%Since $\ude = u \ast \pde$ with $\pde\in C_c^\infty$, we can write $\ude''= u' \ast \pde'$.
%Because $\|\pde'\|_\infty < \infty$ and the integral $\int |x-y|^{\alpha-2}dy$ is locally finite (as shown in the proof of Lemma \ref{udelip}, Case 3), we can conclude that $\ude''$ is uniformly bounded.
%With the boundedness of both $\ude'$ and $\ude''$ confirmed, the convergence of both parts of the integral is ensured, and thus $L_\alpha u_{\delta,\varepsilon}$ is well-defined.

\subsection{Application to time-homogeneous approximations}\label{Construct}
In this section, we demonstrate a key application of our stability results for the time-homogeneous case, namely Corollary \ref{time-homogeneous_case}.
We prove the convergence of a sequence of solutions to SDEs \eqref{SDEn}, a problem central to numerical analysis and approximation theory.
We will show that if a sequence of time-homogeneous coefficients $(b_n, \sigma_n)$ forms a Cauchy sequence in the weighted norms, then the corresponding sequence of solutions $X^{(n)}$ converges to a process $X^\infty$, which is itself the solution to the SDE with the limiting coefficients.

%In this section, we apply our main stability theorems to an important consequence: the convergence of solutions for a sequence of SDEs.
%We will show that if the coefficients of SDE \eqref{SDEn} form a Cauchy sequence in the appropriate norms, then the corresponding sequence of solutions converges to a process that is itself a solution to the SDE with the limiting coefficients.

Let us formalize the assumptions for this section.
We formalize the conditions for this section as follows.

\noindent\textbf{Assumption A.} \textit{Let $\left(x_0^{(n)}\right)_{n\in\n}$ be a sequence of initial values, and $(b_n)_{n\in\n}$ and $(\sigma_n)_{n\in\n}$ be sequences of time-homogeneous drift and jump coefficients, respectively.}
\begin{itemize}
    \item[\textit{(i)}] \textit{The initial values converge: $\lim_{n\to\infty} x_0^{(n)} = x_0$.}
    \item[\textit{(ii)}] \textit{The coefficient sequences $(b_n)_{n\in\n}$ and $(\sigma_n)_{n\in\n}$ are Cauchy sequences in the norms $L^1\left(\mu_{x_0,\sigma}^{\alpha,T}\right)$ and $L^\alpha\left(\mu_{x_0,\sigma}^{\alpha,T}\right)$, respectively.}
    \item[\textit{(iii)}] \textit{The coefficients are uniformly bounded and satisfy uniform continuity conditions. That is, there exist constants $K>0$ and $\eta \in [1/\alpha,1]$ such that for all $n \in \n$ and $x,y \in \real$:
    \begin{align}
        \sup_{n\in\n}\sup_{x\in\real}|b_n(x)| &< \infty, \quad \sup_{n\in\n} |b_n(x)-b_n(y)| \le K|x-y|, \label{Uni_Lip_b_n} \\
        0 < \inf_{n\in\n}\inf_{x\in\real}\sigma_n(x), \quad &\sup_{n\in\n}\sup_{x\in\real}\sigma_n(x) < \infty, \notag\\
        \sup_{n\in\n}|\sigma_n(x)-\sigma_n(y)| &\le K|x-y|^{\eta}. \label{Uni_hol_sigma_n}
    \end{align}}
\end{itemize}
We note that the uniform bounds and H\"older continuity on $\sigma_n$ imply the same for $\sigma_n^\alpha$, independently of $n$.

Our proof of the convergence of the sequence $\left(X^{(n)}\right)_{n\in\n}$ proceeds in two main steps. First, we establish that the sequence of processes is uniformly integrable, which is a key condition for exchanging limits and expectations.

\begin{Lem}\label{Uni_integ_Xn}
    Under Assumption A, for any $p\in(1,\alpha)$, the sequence of random variables $\left(\sup_{t\in[0,T]}\left|X_t^{(n)}\right|\right)_{n\in\n}$ is bounded in $L^p(\Omega)$ uniformly in $n\in\n$.
\end{Lem}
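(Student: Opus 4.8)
The plan is to derive an a priori bound on $\sup_{t\in[0,T]}\big|X_t^{(n)}\big|$ that depends on $X^{(n)}$ only through the driving noise $Z$ and the uniform bounds on the coefficients, and then to take $L^p(\Omega)$ norms term by term. The crucial structural point is that Assumption A(iii) forces $\sup_n\sup_x|b_n(x)|=:C_b<\infty$ and $\sup_n\sup_x\sigma_n(x)=:C_\sigma<\infty$, so the right-hand side of the estimate will not involve $X^{(n)}$ at all; consequently no Gronwall-type argument is needed and the resulting bound is automatically uniform in $n$.

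First I would use the L\'evy--It\^o decomposition $Z_t=\int_0^t\int_{|z|\le1}z\,\tilde N(\rd z,\rd s)+\int_0^t\int_{|z|>1}z\,N(\rd z,\rd s)$ to write, for every $n$,
\begin{align*}
\sup_{t\in[0,T]}\big|X_t^{(n)}\big|
&\le\big|x_0^{(n)}\big|+C_bT
+\sup_{t\in[0,T]}\left|\int_0^t\int_{|z|\le1}\sigma_n\big(X_{s-}^{(n)}\big)\,z\,\tilde N(\rd z,\rd s)\right|\\
&\qquad+C_\sigma\sum_{0<s\le T}|\Delta Z_s|\,\1_{\{|\Delta Z_s|>1\}},
\end{align*}
where for the last term I used $\sigma_n\le C_\sigma$ and bounded the running supremum of the large-jump integral by the total variation of the (a.s.\ finite) large-jump sum. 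By Minkowski's inequality it then suffices to bound the $L^p$ norm of each of the four summands uniformly in $n$.

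The first two are immediate: $\sup_n\big|x_0^{(n)}\big|<\infty$ because the sequence converges (Assumption A(i)), and $C_bT$ is deterministic. For $M^{(n)}_t:=\int_0^t\int_{|z|\le1}\sigma_n(X_{s-}^{(n)})\,z\,\tilde N(\rd z,\rd s)$ the jumps are bounded by $C_\sigma$ and $\e\big[\int_0^T\int_{|z|\le1}|\sigma_n(X_{s-}^{(n)})|^2|z|^2\,\nu(\rd z)\,\rd s\big]\le C_\sigma^2T\int_{|z|\le1}|z|^2\,\nu(\rd z)<\infty$ (finite since $\alpha<2$), so $M^{(n)}$ is a genuine $L^2$-martingale; by the Burkholder--Davis--Gundy inequality followed by Jensen's inequality for the concave map $x\mapsto x^{p/2}$ (valid as $p<2$),
\begin{align*}
\e\left[\sup_{t\in[0,T]}\big|M_t^{(n)}\big|^p\right]
&\le C_p\,\e\big[[M^{(n)}]_T^{p/2}\big]
\le C_p\big(\e[[M^{(n)}]_T]\big)^{p/2}\\
&\le C_p\left(C_\sigma^2T\int_{|z|\le1}|z|^2\,\nu(\rd z)\right)^{p/2},
\end{align*}
which is independent of $n$. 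Finally, $\sum_{0<s\le T}|\Delta Z_s|\1_{\{|\Delta Z_s|>1\}}$ is a compound Poisson sum $\sum_{i=1}^{N_T}Y_i$, with $N_T$ Poisson of parameter $\nu(\{|z|>1\})T$ and i.i.d.\ jump sizes $Y_i$ distributed as the normalised restriction of $\nu$ to $\{|z|>1\}$; since $\int_{|z|>1}|z|^p\,\nu(\rd z)=c_\alpha\int_{|z|>1}|z|^{p-1-\alpha}\,\rd z<\infty$ \emph{precisely because} $p<\alpha$, we get $\e[Y_1^p]<\infty$, and hence $\e\big[\big(\sum_{i=1}^{N_T}Y_i\big)^p\big]\le\e\big[N_T^{p-1}\sum_{i=1}^{N_T}Y_i^p\big]=\e[Y_1^p]\,\e[N_T^p]<\infty$ (Jensen, then conditioning on $N_T$, which has all moments), a quantity depending only on $\alpha,p,T$ and $c_\alpha$.

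Summing the four estimates gives $\sup_n\e[\sup_{t\in[0,T]}|X_t^{(n)}|^p]<\infty$, as claimed. The only substantive point --- and the one place where the hypothesis $p\in(1,\alpha)$ is genuinely used --- is the large-jump contribution: the integrability $\int_{|z|>1}|z|^p\,\nu(\rd z)<\infty$, equivalently the finiteness of the $p$-th moment of $Z_1$, holds exactly when $p<\alpha$. Everything else is routine. If desired, one may instead run the whole argument on the stopped intervals $[0,t\wedge\tau_R^{(n)}]$ with $\tau_R^{(n)}=\inf\{t:|X_t^{(n)}|\ge R\}$ and let $R\to\infty$ with Fatou's lemma, but because the bound does not involve $X^{(n)}$ this precaution is unnecessary here.
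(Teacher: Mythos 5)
Your proof is correct and reaches the same uniform $L^p$ bound, but it proceeds differently from the paper's. The paper bounds the stochastic integral term by citing a ready‑made BDG‑type moment inequality for integrals against symmetric $\alpha$-stable processes (Hashimoto's Lemma 1), namely $\e[\sup_t|\int_0^t H_{s-}\,\rd Z_s|^p]\le C_{p,\alpha}\,\e[(\int_0^T|H_s|^\alpha\,\rd s)^{p/\alpha}]$, which immediately yields the uniform bound since $\sup_n\|\sigma_n\|_\infty<\infty$. You instead rebuild this estimate from scratch: you split $Z$ via the L\'evy--It\^o decomposition, treat the small‑jump compensated integral with the classical BDG inequality together with Jensen for $p/2<1$, and treat the large‑jump part as a compound Poisson sum, using that $\int_{|z|>1}|z|^p\,\nu(\rd z)<\infty$ precisely because $p<\alpha$. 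Both arguments are valid; yours is self‑contained and makes transparent exactly where $p<\alpha$ is used (only in the large‑jump moment), at the cost of being longer, while the paper's is shorter but delegates the core estimate to an external reference. The remaining pieces (initial value, drift term, Minkowski/power‑mean inequality to combine) are handled identically in substance.
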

\begin{proof}
    We start by applying the inequality $|x+y+z|^p \le 3^{p-1}(|x|^p+|y|^p+|z|^p)$, which holds for any $x,y,z\in\real$ and $p\ge 1$.
    This gives us
\begin{align*}
    \e\left[\sup_{0\le t\le T}\left|X_t^{(n)}\right|^p\right]\le 3^{p-1} |x_0^{(n)}|^p +3^{p-1}\e\left[\sup_{0\le t\le T}\left|\int_0^t b_n\left(X_s^{(n)}\right)\rd s\right|^p\right] + 3^{p-1}\e\left[\sup_{0\le t\le T}\left|\int_0^t \sigma_n\left(X_{s-}^{(n)}\right)\rd Z_s\right|^p\right].
\end{align*}
We now bound each term on the right-hand side uniformly in $n\in\n$.
First, the sequence of initial values $\left(x_0^{(n)}\right)_{n\in\n}$ converges by Assumption A(i), so it is bounded.
For the drift term, we use the uniform boundedness of the coefficients from Assumption A(iii):
\begin{align*}
    \e\left[\sup_{0\le t\le T}\left|\int_0^t b_n\left(X_s^{(n)}\right)\rd s\right|^p\right]
    &\le
    \sup_{n\in\n}\sup_{x\in\real}\|b_n\|^p_\infty T^p <\infty.
\end{align*}
For the stochastic integral term, we apply a standard moment inequality for integrals with respect to symmetric $\alpha$-stable processes (see, e.g., \cite[Lemma 1]{Hashimoto}).
This, combined with the uniform boundedness of $\sigma_n$ from Assumption A(iii), yields:
\begin{align*}
    \e\left[\sup_{0\le t\le T}\left|\int_0^t \sigma_n\left(X_{s-}^{(n)}\right)\rd Z_s\right|^p\right]
    \le
    C_{p,\alpha}\e\left[\left(\int_0^T \left|\sigma_n\left(X_{s-}^{(n)}\right)\right|^\alpha\rd s\right)^{p/\alpha}\right]
    \le
    C_{p,\alpha} \sup_{n\in\n}\left\|\sigma_n\right\|^p_\infty T^{p/\alpha}
\end{align*}
The constant $C_{p,\alpha}$ depends only on $p$ and $\alpha$.

Since each of the three terms is bounded by a constant independent of $n$, we conclude that the sequence $\left(\sup_{t\in[0,T]}|X_t^{(n)}|\right)_{n\in\n}$ is uniformly bounded in $L^p$.
This implies uniform integrability.
\end{proof}

We now establish the convergence of the sequence of solutions $\left(X^{(n)}\right)_{n\in\n}$ under Assumption A.
First, let us consider the limits of the coefficients.
Since $(b_n)$ and $(\sigma_n)$ are Cauchy sequences in their respective weighted norms and also satisfy the uniform Lipschitz/H\"older conditions \eqref{Uni_Lip_b_n} and \eqref{Uni_hol_sigma_n}, they converge pointwise to some limit functions $b_\infty$ and $\sigma_\infty$.
By a standard argument (Arzel\'a–Ascoli theorem), this convergence is uniform on compact sets.
The limiting functions $b_\infty$ and $\sigma_\infty$ inherit the same uniform bounds and continuity properties:
\begin{align*}
    \sup_{x\in\real}|b_\infty(x)| < \infty, \quad &|b_\infty(x)-b_\infty(y)| \le K|x-y|, \\
    0 < \inf_{x\in\real}\sigma_\infty(x), \quad \sup_{x\in\real}\sigma_\infty(x) < \infty, \quad &|\sigma_\infty(x)-\sigma_\infty(y)| \le K|x-y|^{\eta}.
\end{align*}
Next, we show that the sequence of solutions $\left(X^{(n)}\right)_{n\in\n}$ converges in $L^p$ for $p \in (1, \alpha)$.
Specifically, we will show that it is a Cauchy sequence in the space $L^p(\Omega; D([0,T]))$, equipped with the supremum norm.
To do this, we need to show that $\\e\left[\sup_{t\in[0,T]}\left|X_t^{(n)} - X_t^{(m)}\right|^p\right] \to 0$ as $n,m \to \infty$.
Our stability estimate in Theorem \ref{mainthm2} provides a bound on the tail probability $\p\left(\sup_{t\in[0,T]}\left|X_t^{(n)} - X_t^{(m)}\right|^{\alpha-1} > h\right)$.
Since the coefficient sequences $\left(b_n\right)_{n\in\n}$ and $\left(\sigma_n\right)_{n\in\n}$ are Cauchy in their respective norms, the right-hand side of the inequality in Theorem \ref{mainthm2} converges to zero as $n,m \to \infty$ for any fixed $h>0$.
This implies that the sequence $\left(\sup_{t\in[0,T]}|X_t^{(n)} - X_t^{(m)}|\right)_{n,m\in\n}$ converges to zero in probability.

To upgrade this convergence in probability to convergence in $L^p$, we need uniform integrability.
Lemma \ref{Uni_integ_Xn} establishes that the sequence $\left(\sup_{t\in[0,T]} \left|X_t^{(n)}\right|\right)_{n\in\n}$ is uniformly bounded in $L^p$.
This implies that the sequence of differences, $\left(\sup_{t\in[0,T]} |X_t^{(n)} - X_t^{(m)}|\right)_{n,m\in\n}$, is also uniformly integrable.

Therefore, the convergence in probability, combined with uniform integrability, guarantees that $\left(X^{(n)}\right)_{n\in\n}$ is a Cauchy sequence in $L^p(\Omega; D([0,T]))$.
Thus, there exists a limiting process $X^\infty$ to which $X^{(n)}$ converges in $L^p$.

Applying Theorem \ref{mainthm2}, we can find a further subsequence $(n_k)_{k\in\n}$ such that the following limit $\displaystyle{X^\infty= \lim_{k\rightarrow\infty}X^{(n_k)}}$ exists almost surely.
The convergence of this subsequence to a limit process $X^\infty$ holds almost surely in the space of c\`adl\`ag functions equipped with the supremum norm, which implies the limit path is well-defined for all $t\in[0,T]$.
The limit $X^\infty$ is c\`adl\`ag since $X^{(n_k)}$ converges in supremum norm by Theorem \ref{mainthm2} (see \cite[P.140]{Applebaum}).
We prove that the limit $X^\infty$ is a solution of SDE \eqref{SDE}.
\begin{Cor}\label{ConstructLemma}
Suppose that $\left(x_0^{(n)}\right)_{n\in\n}$, $(b_n)_{n\in\n}$ and $(\sigma_n)_{n\in\n}$ satisfies Assumption A.
We define $V=\left(V_t\right)_{0 \le t \le T}$ as
\begin{align*}
V_t=x_0+\int_0^t b_\infty\left(X_{s-}^\infty\right)\rd s+ \int_0^t\sigma_{\infty}\left(X_{s-}^\infty\right)\rd Z_s,
\end{align*}
Then, $$\p\left(V_t=X_t^\infty,\textrm{ for each } t\in[0,T]\right)=1.$$
\end{Cor}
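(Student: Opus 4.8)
The plan is to pass to the limit, along the almost surely convergent subsequence $(n_k)_{k\in\n}$ selected above, in each of the three terms of the defining equation
\begin{align*}
X_t^{(n_k)}=x_0^{(n_k)}+\int_0^t b_{n_k}\left(X_s^{(n_k)}\right)\rd s+\int_0^t \sigma_{n_k}\left(X_{s-}^{(n_k)}\right)\rd Z_s
\end{align*}
and then to use the c\`adl\`ag property of the limit to upgrade the resulting identity from ``for each fixed $t$'' to ``for all $t\in[0,T]$ simultaneously''. The ingredients I would use are: (a) $X^{(n_k)}\to X^\infty$ almost surely in the supremum norm on $[0,T]$, which was established above and which also forces $X^{(n_k)}_{s-}\to X^\infty_{s-}$ for every $s$ and guarantees that the ranges of $X^\infty$ and of all but finitely many $X^{(n_k)}$ lie in a common, almost surely bounded, compact set $\mathcal{K}$; (b) $b_{n}\to b_\infty$ and $\sigma_n\to\sigma_\infty$ pointwise, and uniformly on $\mathcal{K}$ by the Arzel\`a--Ascoli argument already given; (c) the uniform bounds and Lipschitz/H\"older constants from Assumption A(iii).

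\emph{Drift term.} For $s\in[0,T]$ I would bound, using the uniform Lipschitz estimate \eqref{Uni_Lip_b_n},
\begin{align*}
\left|b_{n_k}\left(X_s^{(n_k)}\right)-b_\infty\left(X_s^{\infty}\right)\right|
&\le K\left|X_s^{(n_k)}-X_s^{\infty}\right|+\sup_{x\in\mathcal{K}}\left|b_{n_k}(x)-b_\infty(x)\right|,
\end{align*}
so both terms on the right tend to $0$ as $k\to\infty$, uniformly in $s$, by (a) and (b). Since $\sup_{n\in\n}\|b_n\|_\infty<\infty$, the bounded convergence theorem gives $\int_0^t b_{n_k}(X_s^{(n_k)})\rd s\to\int_0^t b_\infty(X_s^\infty)\rd s$ almost surely, and because Lebesgue-almost every $s$ is a continuity point of the path $X^\infty$, the limit equals $\int_0^t b_\infty(X_{s-}^\infty)\rd s$.

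\emph{Stochastic integral term --- the main obstacle.} Here I would invoke the moment inequality for stochastic integrals against the symmetric $\alpha$-stable process already used in the proof of Lemma \ref{Uni_integ_Xn} (e.g.\ \cite[Lemma 1]{Hashimoto}): for $p\in(1,\alpha)$,
\begin{align*}
\e\left[\sup_{t\in[0,T]}\left|\int_0^t\left\{\sigma_{n_k}\left(X_{s-}^{(n_k)}\right)-\sigma_\infty\left(X_{s-}^{\infty}\right)\right\}\rd Z_s\right|^p\right]
&\le C_{p,\alpha}\,\e\left[\left(\int_0^T\left|\sigma_{n_k}\left(X_{s-}^{(n_k)}\right)-\sigma_\infty\left(X_{s-}^{\infty}\right)\right|^\alpha\rd s\right)^{p/\alpha}\right].
\end{align*}
Splitting the integrand exactly as in the drift case, now using the uniform $\eta$-H\"older bound $\bigl|\sigma_{n_k}(X_{s-}^{(n_k)})-\sigma_{n_k}(X_{s-}^\infty)\bigr|\le K\bigl|X_{s-}^{(n_k)}-X_{s-}^\infty\bigr|^\eta$ from \eqref{Uni_hol_sigma_n} together with $\sigma_{n_k}\to\sigma_\infty$ uniformly on $\mathcal{K}$, the integrand converges to $0$ for every $s$, almost surely, and it is dominated by the constant $\bigl(2\sup_{n\in\n}\|\sigma_n\|_\infty\bigr)^\alpha$. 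Dominated convergence applied first in $s$ and then --- since $(\int_0^T|\cdots|^\alpha\rd s)^{p/\alpha}$ is then a bounded random variable --- in $\omega$ shows the right-hand side above tends to $0$, so $\int_0^\cdot \sigma_{n_k}(X_{s-}^{(n_k)})\rd Z_s\to\int_0^\cdot\sigma_\infty(X_{s-}^\infty)\rd Z_s$ in $L^p(\Omega)$, and in particular in probability.

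\emph{Conclusion.} Passing to a further subsequence along which the stochastic-integral convergence also holds almost surely, and combining this with $x_0^{(n_k)}\to x_0$ (Assumption A(i)) and the drift limit, I would obtain $X_t^\infty=V_t$ almost surely for each fixed $t\in[0,T]$. Since both $(X_t^\infty)_{0\le t\le T}$ and $(V_t)_{0\le t\le T}$ are c\`adl\`ag, applying this identity on a countable dense subset of $[0,T]$ containing $T$ and invoking right-continuity yields $\p(V_t=X_t^\infty\text{ for each }t\in[0,T])=1$, which is the assertion. The genuinely delicate point is the stochastic-integral step, where one must simultaneously control the perturbation of the coefficient $\sigma_{n_k}\to\sigma_\infty$ and the perturbation of the driving integrand $X^{(n_k)}\to X^\infty$; it is precisely there that the $\alpha$-stable moment inequality and the uniform bounds of Assumption A are essential.
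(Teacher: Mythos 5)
Your proof is correct, but it takes a genuinely different route from the paper's. The paper proves $\mathbb{E}[|V_t-X_t^\infty|^{\alpha-1}]=0$ by writing it (via bounded/dominated convergence and the $L^p$ convergence $X^{(n)}\to X^\infty$) as $\lim_k\mathbb{E}[|V_t-X_t^{(n_k)}|^{\alpha-1}]$, and then bounding the right side by re-running the Komatsu--It\^o machinery of Theorem~\ref{mainthm}: it introduces the mollified process $u_{\delta,\varepsilon}(V-X^{(n_k)})$, decomposes it into a martingale $\hat M^{\delta,\varepsilon,n}$ and integral terms $\hat I^{\delta,\varepsilon,b,n}$, $\hat I^{\delta,\varepsilon,\sigma,n}$, and drives those to zero using Vitali's convergence theorem (for the $b_\infty(X^\infty)-b_\infty(X^{(n)})$ and $\sigma_\infty(X^\infty)-\sigma_\infty(X^{(n)})$ pieces) together with Fubini and the transition-density estimate \eqref{SDE_density_estimate} (for the $b_\infty(X^{(n)})-b_n(X^{(n)})$ and $\sigma_\infty(X^{(n)})-\sigma_n(X^{(n)})$ pieces, exploiting the Cauchy property in the weighted norms). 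This yields $\sup_t\mathbb{E}[|V_t-X_t^\infty|^{\alpha-1}]\le\varepsilon^{\alpha-1}$ for every $\varepsilon>0$, hence zero, and the c\`adl\`ag upgrade is as in your conclusion. Your argument instead passes to the limit term by term directly in the approximating equation $X^{(n_k)}_t=x_0^{(n_k)}+\int_0^t b_{n_k}(X^{(n_k)}_s)\,\mathrm{d}s+\int_0^t\sigma_{n_k}(X^{(n_k)}_{s-})\,\mathrm{d}Z_s$, using almost-sure sup-norm convergence, local uniform convergence of the coefficients (from equi-Lipschitz/H\"older plus pointwise convergence), bounded convergence for the Lebesgue integral, and the $\alpha$-stable moment inequality (\cite[Lemma~1]{Hashimoto}, already invoked in Lemma~\ref{Uni_integ_Xn}) plus dominated convergence for the stochastic integral; this is the classical ``identify the limit as a solution'' argument. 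Your route is more elementary and avoids the transition density entirely at this stage, which is appealing; the paper's route is heavier but stays coherent with its own Komatsu framework and in particular reuses the estimates for the mollifier $u_{\delta,\varepsilon}$ rather than importing the $L^p$-BDG-type bound. Both reach the same ``$=$ a.s.\ for each fixed $t$, then upgrade by right-continuity'' endgame. One small presentational point: the random compact $\mathcal{K}=\mathcal{K}(\omega)$ you invoke is not strictly necessary --- the equi-Lipschitz/H\"older bounds together with pointwise convergence $b_{n}\to b_\infty$, $\sigma_n\to\sigma_\infty$ already give $b_{n_k}(X^{(n_k)}_s)\to b_\infty(X^\infty_s)$ and $\sigma_{n_k}(X^{(n_k)}_{s-})\to\sigma_\infty(X^\infty_{s-})$ $\omega$-wise directly --- but it does no harm.
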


\begin{proof}
The existence of the limit of subsequences of solutions $\left(X^{(n)},b_n,\sigma_n\right)_{n\in\n}$ was discussed above.
From Theorem \ref{mainthm} and the bounded convergence theorem, we have
\begin{align*}
\e\left[\left|V_t-X_t^\infty\right|^{\alpha-1}\right]=\lim_{k\rightarrow\infty}\e\left[\left|V_t-X_t^{(n)}\right|^{\alpha-1}\right].
\end{align*}
In the same way as shown in the proof of Theorem \ref{mainthm}, we have
\begin{align}
\left|V_t-X_t^{(n)}\right|^{\alpha-1}\le \left|x_0-x_0^{(n)}\right|^{\alpha-1} + \eps^{\alpha-1}+ \hat{M}_t^{\delta,\eps,n}+ \hat{I}_t^{\delta,\eps,b,n}+ \hat{I}_t^{\delta,\eps,\sigma,n}, \label{construct1}
\end{align}
where 
\begin{align*}
\hat{M}_t^{\delta,\eps,n}&\coloneqq \int_{0}^{t}\int_{\real \setminus \{0\}}\left\{ \ude\left(V_{s-}-X_{s-}^{(n)}+\left(\sigma_\infty\left(X_s^\infty\right)-\sigma_{n_k}\left(X_s^{(n)}\right)\right)z\right)-\ude\left(V_{s-}-X_{s-}^{(n)}\right)\right\}\tilde{N}(\rd z,\rd s),\\
\hat{I}_t^{\delta,\eps,b,n}&\coloneqq \int_0^t \left|\ude'\left(V_s-X_s^{(n)}\right)\right|\left|b_\infty\left(X_s^\infty\right)-b_{n_k}\left(X_s^{(n)}\right)\right|\rd s,\\
\hat{I}_t^{\delta,\eps,\sigma,n}&\coloneqq \frac{2 C_{\alpha}}{\log \delta}\left(\frac{\delta}{\eps}\right) \int_0^t \left|\sigma_\infty(X_s^\infty)-\sigma_{n_k}\left(X_s^{(n)}\right)\right|^\alpha \rd s.
\end{align*}
Following the same arguments as in Subsection \ref{M-martingale}, we can show that $\hat{M}^{\delta,\eps,n}\coloneqq \left(\hat{M}_t^{\delta,\eps,n}\right)_{0\le t \le T}$ is a martingale.
First, we show the case where the coefficient satisfies assumption A.

By using Fatou's lemma and $\left(X_t^{(n)}\right)_{k\in\n}$ converges to $X^\infty$ in $L^p$ for any $p\in(1,\alpha)$, we obtain
\begin{align*}
    \sup_{0\le t\le T}\e\left[\left|X_t^{\infty}\right|^p\right]
    \le 
    \sup_{0\le t\le T}\liminf_{n\to\infty}\e\left[\left|X_t^{(n)}\right|^p\right]<\infty.
\end{align*}
Hence, $\left|X_t^\infty-X_t^{(n)}\right|$ is $L^p$ integrable for each $t\in[0,T]$.

By the inequality $|x+y|^p\le 2^{p-1}\left(|x|^p+|y|^p\right)$ for any $x,y\in\real$ and any $p\ge 1$, we have
\begin{align*}
\hat{I}_t^{\delta,\eps,b,n}&\le \|\ude'\|_\infty \int_0^t \left|b_\infty\left(X_s^\infty\right)-b_\infty\left(X_s^{(n)}\right)\right|\rd s + \|\ude'\|_\infty \int_0^t \left|b_\infty\left(X_s^{(n)}\right)-b_n\left(X_s^{(n)}\right)\right|\rd s,\\
\hat{I}_t^{\delta,\eps,\sigma,n}&\le \frac{2^\alpha C_{\alpha}}{\log \delta}\left(\frac{\delta}{\eps}\right)\int_0^t \left|\sigma_\infty(X_s^\infty)-\sigma_\infty\left(X_s^{(n)}\right)\right|^\alpha \rd s + \frac{2^\alpha C_{\alpha}}{\log \delta}\left(\frac{\delta}{\eps}\right)\int_0^t \left|\sigma_\infty\left(X_s^{(n)}\right)-\sigma_n\left(X_s^{(n)}\right)\right|^\alpha\rd s.
\end{align*}
%Here, the limit $b_\infty$ and $\sigma_\infty$ are also $\gamma$-H\"older continuous and $\eta$-H\"older continuous almost everywhere with respect to the Lebesgue measure, respectively.
%Let $\mathcal{N}$ denote the set of points where H\"older continuity does not hold for $b_\infty$ or $\sigma_\infty$.
%It follows that $\p\left(X_t^{(n)}\in\mathcal{N}\right)=\p\left(X_t^{\infty}\in\mathcal{N}\right)=0$ for each $t\in(0,T]$.
Note that any bounded, $q$-H\"older continuous function is also $r$-H\"older continuous for any exponent $r<q$.
By using the Vitali convergence theorem, we have
\begin{align*}
\lim_{n\to\infty} \int_0^T \e\left[\left|b_\infty\left(X_s^\infty\right)-b_\infty\left(X_s^{(n)}\right)\right|\right]\rd s
=
\lim_{n\to\infty} \int_0^T \e\left[\left|\sigma_\infty\left(X_s^\infty\right)-\sigma_\infty\left(X_s^{(n)}\right)\right|^\alpha\right]\rd s
=
0.
\end{align*}
Here, by the assumption A on $b_n$ and $\sigma_n$, it follows that each process $X^{(n)}=\left(X_t^{(n)}\right)_{t\in[0,T]}$ has a transition density function for each $t\in(0,T]$ and each $n\in\n$.
The density of $X^{(n)}$ has an upper bound by \eqref{SDE_density_estimate}, so that
we have by Fubini's theorem, 
\begin{align}
\lim_{n\to\infty} \int_0^T \e\left[\left|b_\infty\left(X_s^{(n)}\right)-b_{n_k}\left(X_s^{(n)}\right)\right|\right]\rd s
=
\lim_{n\to\infty} \int_0^T \e\left[\left|\sigma_\infty\left(X_s^{(n)}\right)-\sigma_{n_k}\left(X_s^{(n)}\right)\right|^\alpha\right]\rd s
=
0.\label{conv.integral_by_density}
\end{align}
Taking the expectation in \eqref{construct1} and letting $n\to\infty$, the convergence results established above for both parts of the integral terms imply that $\e\left[\hat{I}_t^{\delta,\eps,b,n}\right]$ and $\e\left[\hat{I}_t^{\delta,\eps,\sigma,n}\right]$ converge to zero.
Hence, for any $\eps>0$, we have
\begin{align*}
\lim_{n\rightarrow\infty}\sup_{0\le t\le T}\e\left[\left|V_t-X_t^{(n)}\right|^{\alpha-1}\right]\le  \eps^{\alpha-1}.
\end{align*}
Therefore, we get
\begin{align*}
\lim_{n\rightarrow\infty}\sup_{0\le t\le T}\e\left[\left|V_t-X_t^{(n)}\right|^{\alpha-1}\right]=0.
\end{align*}
Thus, we have
\begin{align*}
\p\left(V_t=X_t^\infty\right)=1 \textrm{ for each } t\in[0,T].
\end{align*}
Here, since each sample path of $V$ and $X^\infty$ is c\`adl\`ag, we have (see \cite{Philip}, Section I, Theorem 2)
\begin{align*}
\p\left(V_t=X_t^\infty,\textrm{ for each } t\in[0,T]\right)=1.
\end{align*}
\end{proof}

\begin{remark}
When $(b_n)_{n\in\n}$ and $(\sigma_n)_{n\in\n}:[0,\infty)\times \real \to \real$ are a Cauchy sequence in the sense of the sup norm, the following assumptions satisfy the same result as Lemma \ref{ConstructLemma}.
There exist nonnegative functions $f_b$ and $f_\sigma$ and a constant $\eta\in[1/\alpha,1]$ such that for any $t\in[0,T]$ and $x,y\in\real$
\begin{align*}
\sup_{n\in\n}\|b_n\|_\infty, \sup_{n\in\n}\|\sigma_n\|_\infty, \int_0^T f_{b}(s) \rd s,\int_0^T f_{\sigma}^2(s) \rd s<\infty,\\
\sup_{n\in\n} \left|b_n(t,x)-b_n(t,y)\right|\le f_{b}(t)|x-y|,\quad
\sup_{n\in\n}\left|\sigma_{n}(t,x)-\sigma_{n}(t,y)\right|\leq f_{\sigma}(t)|x-y|^{\eta},
\end{align*}
%In this case, the condition that $(\sigma_n)_{n\in\n}$ is strictly greater than zero is no longer necessary.
%One might wonder if the boundedness of the coefficients is no longer necessary, but  those that satisfy these conditions is indeed bounded.
%Note that each $b_n$, $\sigma_n$, and their respective limits become bounded.
When the assumption holds, it can be shown that $\left(\sup_{t\in[0,T]}\left|X^{(n)}_t\right|\right)_{n\in\n}$ is $L^p(\Omega)$ integrable for $p\in(1,\alpha)$ and each integral in \eqref{conv.integral_by_density} converges to zero by evaluating it using the sup-norm of the difference of the coefficients.
The remaining proofs are identical to Corollary \ref{ConstructLemma}.
\end{remark}

%\section*{Acknowledgements}
%I would like to express my sincere gratitude to Professor Arturo Kohatsu-Higa for his invaluable support and guidance, particularly concerning the L\'evy process, which was instrumental in the completion of this paper.
%I am also grateful to Dai Taguchi and Mizuki Furusawa for their helpful mentorship during the Lévy process seminars.
%I would also like to thank Hiroto \^Ono, G\^o Y\^uki, Ngoc Khue Tran and Tomooki Yuasa for their valuable discussions and insightful comments.
%Discussing with them helped me write this article and they gave many insightful comments.

\end{document}